\newtheorem{theorem}{Theorem}[section]
\newtheorem{lemma}[theorem]{Lemma}
\newtheorem{definition}[theorem]{Definition}
\newtheorem{remark}[theorem]{Remark}
\newtheorem*{theorem A}{Theorem A}
\newtheorem*{corollary B}{Corollary B}
\newtheorem*{theorem C}{Theorem C}
\newtheorem*{corollary D}{Corollary D}
\theoremstyle{definition}
\begin{document}
\title {On the measure-theoretic pressure associated with Feldman-Katok metric and max-mean metric}

\author{Zhongxuan Yang$^{*,1}$}
\address{College of Mathematics and Statistics, Chongqing University, Chongqing 401331, China}
\thanks{$^*$ Corresponding author and email: yzx@stu.cqu.edu.cn.}
\email{yzx@stu.cqu.edu.cn}

\author{Xiaojun Huang$^1$}
\address{College of Mathematics and Statistics, Chongqing University, Chongqing 401331, China}
\email{hxj@cqu.edu.cn}

\author{Jiajun Zhang$^1$}
\address{College of Mathematics and Statistics, Chongqing University, Chongqing 401331, China}
\email{zjj123@stu.cqu.edu.cn}

\keywords{Measure-theoretic pressure, topological pressure, Feldman-Katok metric, max-mean metric}

\subjclass[2010]{28D20,37A35,37B02}

\begin{abstract} In this manuscript,  we present  modified Feldman-katok metric $d_{F K_{n}^{q}}$ and modified max-mean metric $\hat{d}_{n}^{q}$, then apply them to examine  the  measure-theoretic pressure and topological pressure. Subsequently, we extend Katok’s entropy formula to measure-theoretic pressure associated with  modified Feldman-katok metric $d_{F K_{n}^{q}}$, and obtain an equivalence between the topological pressure with respect to modified Feldman-Katok metric $d_{F K_{n}^{q}}$ and the classical topological pressure. In addition, we  similarly  generalize Katok’s entropy formula to measure-theoretic pressure associated with  modified max-mean metric $\hat{d}_{n}^{q}$.
\end{abstract}

\maketitle
\thispagestyle{empty}

\section{Introduction}
Entropy serves as a fundamental measure of the complexity inherent in dynamical systems, which is measured by the exponential growth rate of the account  of orbits differentiated  with limiting accuracy. Both measure-theoretic entropy and topological entropy play pivotal roles in quantifying this complexity. The interplay between these two measures is encapsulated in the well-established variational principle.  Kolmogorov pioneered the concept of measure-theoretic entropy for measure-preserving dynamical systems, considering it as an isomorphic invariant quantity \cite{kolomogorov1958new,kolmogorov1959entropy}. Subsequently, Adler, Konheim and McAndrew  \cite{adler1965topological} introduced topological entropy for topological dynamical systems using open covers and regarded it as a conjugate invariant quantity. Then, Bowen \cite{bowen1971entropy} and Dinaburg \cite{dinaburg1971connection} independently presented equivalent formulations based on separating sets and spanning sets. These developments have laid the foundation for understanding the intricate relationship between measure-theoretic entropy and topological entropy in the realm of dynamical systems.

Later on, inspired by the Gibbs state theory in statistical mechanics, Ruelle \cite{ruelle1982repellers} introduced the notion of topological pressure in dynamical systems, which is a non-trivial and natural generalisation of topological entropy.  Later, Walters \cite{walters1982introduction}  extended it to general topological dynamical systems.  Aiming to better study the theory related to topological entropy, Pesin and Pitskel \cite{pesin1984topological}  generalized Bowen’s definition of topological entropy from a viewpoint of dimension theory and defined the topological pressure on non-compact sets. Topological pressure, the variational principle, and equilibrium state theory play an important role in statistical mechanics, ergodic theory, and dynamical systems, and form a major part of the thermodynamic formalism \cite{cai2023feldman,zhang2023note,zhang2009variational,he2004definition,huang2019measure1}.

In recent years, a wide variety of metrics have provided us with more perspectives and horizons when studying the characteristics  of dynamical systems, especially in terms of topological entropy, topological pressure, weigthed topological pressure and  bounded complexity etc.  Huang, Li, Thouvenot, Xu and Ye \cite{huang2021bounded} studied the topological dynamical systems and ergodic theory of dynamical systems with bounded complexity relevant to three metrics: the Bowen metric $d_n$, the max-mean metric $\hat{d}_{n}$ and the mean metric $\bar{d}_{n}$.  It is shown that an invariant measure $\mu$ has bounded complexity relevant to $\hat{d}_{n}$ if and only if $\mu$ has bounded complexity relevant to $\bar{d}_{n}$ if and if and only if it has discrete spectrum. Within the exploration of Sarnak's conjecture, Huang, Wang, and Ye \cite{huang2019measure} introduced the measure complexity relevant to an invariant measure, akin to the concept introduced by Katok \cite{katok1980lyapunov}, albeit employing the mean metric rather than the Bowen metric, and  they showed that when replacing Bowen metric with  mean metric, the Katok's entropy formula for  ergodic measure is still valid. Afterwards, Huang, Wei, Yu and Zhou \cite{huang2022measure} stated that  Katok’s entropy formula  still holds when replacing the Bowen metric with the modified mean metric in the study of  measure complexity for  topological dynamical systems.  Gr{\"o}ger and J{\"a}ger \cite{groger2016some}  provided a notion of topological entropy for the dynamical system relevant to mean metric using separated sets and demonstrated an equivalence between  topological entropy relevant to mean metric and  classical topological entropy. More recently, Huang, Chen and Wang \cite{huang2019measure1,huang2018entropy}  further investigated  the Katok’s entropy formula and Brin-Katok formula of conditional entropy in mean metric, established Katok’s entropy formula for ergodic measures relevant to mean metric.  Zhang, Zhao and Liu \cite{zhang2023note} explored  measure-theoretic pressure and topological pressure associated with modified Bowen metric and modified mean metric, they extended Katok’s entropy formula to measure-theoretic pressure with modified Bowen metric and modified mean metric.

The Feldman-Katok metric was introduced in  \cite{dominik2017feldman}, as a topological counterpart of the edit distance  $\overline{f}$ which was presented by Feldman  \cite{feldman1976new} to study loosely Bernoulli system. Recently, García-Ramos and Kwietniak \cite{garcia2022topological} introduced FK-sensitivity and FK-continuity to delineate models of zero entropy loosely Bernoulli systems. They rigorously demonstrated that, for a minimal dynamical system, it is  either FK-continuity or FK-sensitivity.  Downarowicz, Kwietniak and {\L}{\k{a}}cka \cite{downarowicz2021uniform} introduced the concept of $\overline{f}$-pseudometric to finite-valued stationary stochastic processes, employing it to explore entropy rate. Cai and Li \cite{cai2023feldman,cai2023feldman1} investigated entropy formulas and topological pressure  with respect to  the Feldman-Katok metric. Recently, Xie, Chen and Yang \cite{xie2024weighted} introduced weighted topological entropy defined by the Feldman-Katok metric and demonstrated its equivalence to the weighted topological entropy associated with  Bowen metric.

In this manuscript, drawing inspiration from the works of  \cite{huang2019measure1,huang2021bounded,zhang2023note,cai2023feldman1,cai2023feldman}, we adopt  two metrics, namely the modified Feldman-Katok metric $d_{FK_{n}^{q}}$ and the modified max-mean metric $\hat{d}_{n}^{q}$, to redefine measure-theoretic pressure and topological pressure. Specifically,  Katok’s entropy formula is extended to measure-theoretic pressure associated with the aforementioned metrics,  and we establish the equivalences between the topological pressure computed with these metrics and the classical topological pressure. This manuscript is structured as follows: In Section 2  we introduce the notion of modified Feldman-Katok metrics and review related results. In Sections 3,4 we delve into the study of measure-theoretic pressure and topological pressure  associated with the modified Feldman-Katok metric $d_{FK_{n}^{q}}$.  Lastly,  in Section 5 we discuss the  measure-theoretic pressure and topological pressure associated with the modified max-mean metric.

\section{Preliminaries}

In this manuscript,  a topological dynamical system (TDS for short) refers to a pair  $(X, T) $, where  $X$  is a compact metric space with a metric $ d $ and  $T$  is a homeomorphism from  $X $ to itself. Let  $C(X, \mathbb{R})$  denote the set of all continuous functions of  $X $.  The sets of natural and real numbers are denoted by $\mathbb{N}$ and $\mathbb{R}$, respectively. We denote $\{0,1,\cdots, n-1\}$ by $[n]$   For every $n\in \mathbb{N}$.  For the sake of convenience, we declare that
all our basic notions and notations (such as invariant measure, entropy, topological pressure  etc.)
are as in the textbook \cite{walters1982introduction}, and we will only explain some of them.

Given a TDS  $(X, T) $, assume  that  $\mathscr{B}(X) $ is the $\sigma$-algebra of Borel subsets of  $X$. Let  $M(X) $ be the collection of all  Borel probability measures defined on the measurable space  $(X, \mathscr{B}(X))$. 
We say  $\mu \in M(X)$  is  $T$-invariant if  $\mu\left(T^{-1}(A)\right)=\mu(A)$  holds for any  $A \in \mathscr{B}(X) $. Denote by $ M(X, T)$  the collection of all $ T $-invarant  Borel probability measures defined on the measurable space  $(X, \mathscr{B}(X))$. In the weak$^{*}$ topology,  $M(X, T)$  is a nonempty compact convex set. We say  $\mu \in M(X, T)$  is ergodic if for any  $A \in \mathscr{B}(X)$  with  $T^{-1} A=A$, $ \mu(A)=0  $ or $ \mu(A)=1 $ holds. Denote by  $E(X, T)$  the collection of all ergodic measures on $ (X, T)$. It is well known that $ E(X, T)$  is the collection of all extreme points of  $M(X, T)$ and  $E(X, T)$  is nonempty.  We note that each  $\mu \in M(X, T)$  induces a measure preserving system (MPS for short) $\left(X, \mathcal{B}_{X}, \mu, T\right) $ (see \cite{kerr2016ergodic,walters1982introduction}).

\subsection{Measure-theoretic entropy}

A partition of $ X $ is a family of subsets of  $X $ with union $ X $ and all elements of the family are pairwise disjoint. Denote by  $\mathcal{P}_{X}$  the collection of all finite Borel partitions of $ X $. For any  $\alpha, \beta \in \mathcal{P}_{X}, \alpha $ is said to be finer than  $\beta$  (write $ \beta \preceq \alpha $ ) if each atom of  $\alpha $ is contained in some atom of $ \beta $. Let  $\alpha \vee \beta=\{A \cap B: A \in \alpha, B \in \beta\} $.
Let  $\mu \in {M}(X, T) $. Given  $\alpha \in \mathcal{P}_{X} $, define
$$
H_{\mu}(\alpha)=\sum_{A \in \alpha}-\mu(A) \log \mu(A)
$$

It is not hard to see that  $\left\{H_{\mu}\left(\bigvee_{i=0}^{n-1} T^{-i} \alpha\right)\right\}_{n=1}^{\infty}$  is a non-negative and subadditive sequence. Thus we define the measure-theoretic entropy of  $\mu$  relative to  $\alpha$  by
$$
h_{\mu}(T, \alpha)=\lim _{n \rightarrow \infty} \frac{1}{n} H_{\mu}\left(\bigvee_{i=0}^{n-1} T^{-i} \alpha\right)=\inf _{n \geq 1} \frac{1}{n} H_{\mu}\left(\bigvee_{i=0}^{n-1} T^{-i} \alpha\right) .
$$

The measure-theoretic entropy of  $\mu$ is defined  by
$$
h_{\mu}(T)=\sup _{\alpha \in \mathcal{P}_{X}} h_{\mu}(T, \alpha) .
$$

In the following,  we shall apply Feldman-Katok metric  to explore  measure-theoretic pressure and topological pressure.  Let us recall the notion of Feldman-Katok metric.  Let  $(X, T) $ be a TDS. Given  $x, y \in X, \delta>0$  and  $n \in \mathbb{N} $, define an  $(n, \delta) $-match of  $x$  and  $y$  to be an order preserving (i.e.  $\pi(i)<\pi(j)$  whenever  $i<j $ ) bijection $ \pi: D(\pi) \rightarrow R(\pi) $ such that $ D(\pi), R(\pi) \subset [n] $ and for every  $i \in D(\pi)$ then  $d\left(T^{i} x, T^{\pi(i)} y\right)<   \delta $. Let  $|\pi|$  be the cardinality of $ D(\pi) $. We set
$$
\mathscr{F}_{n, \delta}(x, y)=1-\frac{\max \{|\pi|: \pi \text { is an }(n, \delta) \text {-match of } x \text { and } y\}}{n}
$$
and
$$
\mathscr{F}_{\delta}(x, y)=\limsup _{n \rightarrow+\infty} \mathscr{F}_{n, \delta}(x, y) \text {. }
$$

\begin{definition}\cite{cai2023feldman1}
 The Feldman-Katok metric of  $(X, T)$ is defined by
$$
d_{F K_{n}}(x, y)=\inf \left\{\delta>0: \mathscr{F}_{n, \delta}(x, y)<\delta\right\},
$$
and
$$
d_{F K}(x, y)=\inf \left\{\delta>0: \mathscr{F}_{\delta}(x, y)<\delta\right\}$$
for any  $x, y \in X $.
\end{definition}

Next, we  introduce the notion of modified  Feldman-Katok metric to investigate the measure-theoretic pressure. Let  $(X, T) $ be a TDS. Given  $x, y \in X, \delta>0$  and  $n,q \in \mathbb{N}$,  define an  $(q,n, \delta) $-match of  $x$  and  $y$  to be an order preserving  bijection $ \pi: D(\pi) \rightarrow R(\pi) $ such that $ D(\pi), R(\pi) \subset [n] $ and for every  $i \in D(\pi)$  then  $d\left(T^{qi} x, T^{q\pi(i)} y\right)<   \delta $. We set
$$
\mathscr{F}_{n, \delta}^{q}(x, y)=1-\frac{\max \{|\pi|: \pi \text { is an }(q,n, \delta) \text {-match of } x \text { and } y\}}{n}
$$
and
$$
\mathscr{F}_{\delta}^{q}(x, y)=\limsup _{n \rightarrow+\infty} \mathscr{F}_{n, \delta}^{q}(x, y) \text {. }
$$

\begin{definition}
 The $q$-Feldman-Katok metric of  $(X, T)$ is defined by
	$$
	d_{F K_{n}^{q}}(x, y)=\inf \left\{\delta>0: \mathscr{F}_{n, \delta}^{q}(x, y)<\delta\right\},
	$$
	and
	$$
	d_{F K^{q}}(x, y)=\inf \left\{\delta>0: \mathscr{F}_{\delta}^{q}(x, y)<\delta\right\}$$
	for any  $x, y \in X $. When $q=1$, $d_{F K_{n}^{1}}(\cdot, \cdot)$ and $d_{F K^{1}}(\cdot, \cdot)$ are just $d_{F K_{n}}(\cdot, \cdot)$ and $d_{F K}(\cdot, \cdot)$ respectively.
\end{definition}

 Given  $\varphi \in C(X, \mathbb{R}), \mu \in M(X, T) $ and  $\varepsilon>0 $, write
 $
 \operatorname{Var}(\varphi, \varepsilon):=\sup \{|\varphi(x)-\varphi(y)|: d(x, y)<\varepsilon\}
 $,  $\|\varphi\|=\sup_{x\in X}|\varphi (x)|,$ and
$
S_{n}^{q} \varphi(x):=\varphi(x)+\varphi\left(T^{q} x\right)+\ldots+\varphi\left(T^{q(n-1)} x\right),
$
when $q=1$, we simply write $ S_{n}^{1}$ as $ S_{n}$.

Let
$$
S_{d_{F K_{n}^{q}}}(\varphi, \mu, \varepsilon)=\inf \left\{\sum_{i=1}^{k} e^{S_{n}^{q} \varphi\left(x_{i}\right)}, \text { s.t. } \mu\left(\bigcup_{i=1}^{k} B_{d_{F K_{n}^{q}}}\left(x_{i}, \varepsilon\right)\right)>1-\varepsilon\right\}
$$
where
$
B_{d_{F K_{n}^{q}}}(x, \varepsilon)=\left\{y \in X: d_{F K_{n}^{q}}(x, y)<\varepsilon\right\}
$
for any  $x \in X $. We define
$$
S_{n}^{FK}(\varphi, \mu, \varepsilon)=\inf _{q \geq 1} S_{d_{F K_{n}^{q}}}(\varphi, \mu, \varepsilon) .
$$

In order to obtain the main results below, we also need the modified Bowen measure $d_{n}^{q}$ and the modified mean measure $\bar{d}_{n}^{q}$,
$$
d_{n}^{q}(x, y)=\max _{0 \leq i \leq n-1} d\left(T^{q i} x, T^{q i} y\right)
$$
$$
S_{d_{n}^{q}}(\varphi, \mu, \varepsilon)=\inf \left\{\sum_{i=1}^{k} e^{S_{n}^{q} \varphi\left(x_{i}\right)}, \text { s.t. } \mu\left(\bigcup_{i=1}^{k} B_{d_{n}^{q}}\left(x_{i}, \varepsilon\right)\right)>1-\varepsilon\right\}
$$
where
$
B_{d_{n}^{q}}(x, \varepsilon)=\left\{y \in X: {d_{n}^{q}}(x, y)<\varepsilon\right\}
$
for any  $x \in X $. We define
$$
\tilde{S}_{n}(\varphi, \mu, \varepsilon)=\inf _{q \geq 1} S_{d_{n}^{q}}(\varphi, \mu, \varepsilon) .
$$
When $q=1$, we simply write $d_{n}^{1}(\cdot, \cdot)$ as $d_{n}(\cdot, \cdot)$.

For any $x, y \in X $,  define
$$
\bar{d}_{n}^{q}(x, y)=\frac{1}{n} \sum_{i=0}^{n-1} d\left(T^{q i} x, T^{q i} y\right).
$$
 For  $\varepsilon>0 $ and  $\varphi \in C(X, \mathbb{R}) $, let
$$
S_{\bar{d}_{n}^{q}}(\varphi, \mu, \varepsilon)=\inf \left\{\sum_{i=1}^{k} e^{S_{n}^{q} \varphi\left(x_{i}\right)} \text { s.t. } \mu\left(\bigcup_{i=1}^{k} B_{\bar{d}_{n}^{q}}\left(x_{i}, \varepsilon\right)\right)>1-\varepsilon\right\},
$$
where
$
B_{\bar{d}_{n}^{q}}(x, \varepsilon)=\left\{y \in X: \bar{d}_{n}^{q}(x, y)<\varepsilon\right\}
$
for any  $x \in X $. We define
$$
\bar{S}_{n}(\varphi, \mu, \varepsilon)=\inf _{q \geq 1} S_{\bar{d}_{n}^{q}}(\varphi, \mu, \varepsilon) .
$$
When $q=1$, we simply write $\bar{d}_{n}^{1}(\cdot, \cdot)$ as $\bar{d}_{n}(\cdot, \cdot)$.

It is worth noting that when $q=1$,  the measure-theoretic pressure version of Katok's entropy formula associated with  Bowen metric has been established as follows:

\begin{theorem}\cite{he2004definition}
 Let  $(X, T)$  be a TDS. For any  $\mu \in E(X, T)$  and  $\varphi \in C(X, \mathbb{R}) $, then
$$
h_{\mu}(T)+\int \varphi \mathrm{d} \mu=\lim _{\varepsilon \rightarrow 0} \liminf _{n \rightarrow \infty} \frac{1}{n} \log S_{d_{n}}(\varphi, \mu, \varepsilon)=\lim _{\varepsilon \rightarrow 0} \limsup _{n \rightarrow \infty} \frac{1}{n} \log S_{d_{n}}(\varphi, \mu, \varepsilon) .
$$
\end{theorem}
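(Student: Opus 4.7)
The plan is to establish the common value by proving two matching inequalities separately: (a) $h_\mu(T)+\int\varphi\,d\mu \le \lim_{\varepsilon\to 0}\liminf_n \frac{1}{n}\log S_{d_n}(\varphi,\mu,\varepsilon)$, and (b) $\lim_{\varepsilon\to 0}\limsup_n \frac{1}{n}\log S_{d_n}(\varphi,\mu,\varepsilon) \le h_\mu(T)+\int\varphi\,d\mu$. Since $\liminf\le\limsup$ always, these two inequalities together force the full equality. Throughout, write $\alpha^n=\bigvee_{i=0}^{n-1}T^{-i}\alpha$ for a finite Borel partition $\alpha$.

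For the upper bound (b), I will fix $\delta>0$ and select a finite Borel partition $\alpha_0$ with $h_\mu(T,\alpha_0)>h_\mu(T)-\delta$; given $\varepsilon>0$, refine $\alpha_0$ to $\alpha$ with $\diam(\alpha)<\varepsilon$, which preserves the entropy lower bound since refinement can only increase $h_\mu(T,\cdot)$. Applying Shannon--McMillan--Breiman to $\alpha$ and Birkhoff's theorem to $\varphi$, Egorov yields a set $E$ with $\mu(E)>1-\varepsilon$ and an $N$ such that for all $n\ge N$ and $x\in E$, $\mu(\alpha^n(x))\ge e^{-n(h_\mu(T,\alpha)+\delta)}$ and $|S_n\varphi(x)-n\int\varphi\,d\mu|\le n\delta$. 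The number of atoms of $\alpha^n$ meeting $E$ is then at most $e^{n(h_\mu(T)+\delta)}$; each atom has $d_n$-diameter less than $\varepsilon$, so picking one representative $x_A\in A\cap E$ per atom produces a Bowen-ball cover of $E$ with $\sum_A e^{S_n\varphi(x_A)}\le e^{n(h_\mu(T)+\int\varphi\,d\mu+2\delta)}$. Sending $n\to\infty$, then $\varepsilon\to 0$, then $\delta\to 0$ yields (b).

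For the lower bound (a), the cleanest route uses the Brin--Katok local entropy formula: for ergodic $\mu$ and $\mu$-a.e.\ $x$, $\lim_{\varepsilon\to 0}\liminf_n -\frac{1}{n}\log\mu(B_{d_n}(x,\varepsilon))=h_\mu(T)$. Combined with Birkhoff and Egorov, I obtain a set $E$ with $\mu(E)>1-\varepsilon/4$, a radius $\varepsilon_0>0$, and $N\in\mathbb{N}$ so that for every $y\in E$, $n\ge N$, and $\varepsilon<\varepsilon_0/2$, both $\mu(B_{d_n}(y,2\varepsilon))\le e^{-n(h_\mu(T)-\delta)}$ and $|S_n\varphi(y)/n-\int\varphi\,d\mu|\le\delta$. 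For any covering $\{B_{d_n}(x_i,\varepsilon)\}_{i=1}^k$ whose $\mu$-measure exceeds $1-\varepsilon/4$, each $i$ whose ball meets $E$ admits some $y_i\in E\cap B_{d_n}(x_i,\varepsilon)$; by the triangle inequality $B_{d_n}(x_i,\varepsilon)\subset B_{d_n}(y_i,2\varepsilon)$, so $\mu(B_{d_n}(x_i,\varepsilon))\le e^{-n(h_\mu(T)-\delta)}$, while $|S_n\varphi(x_i)-S_n\varphi(y_i)|\le n\operatorname{Var}(\varphi,\varepsilon)$ controls the $\varphi$-term. A counting argument then forces at least $(1-\varepsilon/2)e^{n(h_\mu(T)-\delta)}$ such indices, and the resulting lower estimate on $\sum_i e^{S_n\varphi(x_i)}$, followed by $\liminf_n$, $\varepsilon\to 0$ (using $\operatorname{Var}(\varphi,\varepsilon)\to 0$ by uniform continuity of $\varphi$), and $\delta\to 0$, delivers (a).

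The main obstacle is the lower bound, where three independent approximations---the Brin--Katok local measure estimate, the Birkhoff average of $\varphi$, and the uniform continuity of $\varphi$---must be synchronized on a single large-measure set via Egorov, while carefully absorbing the radius inflation $\varepsilon\mapsto 2\varepsilon$ caused by re-centering at nearby typical points. Once Brin--Katok is taken as a black box, the remaining steps are standard counting and approximation manipulations.
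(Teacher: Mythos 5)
The paper does not actually prove this statement: it is Theorem 2.3, imported verbatim from \cite{he2004definition}, so the only in-paper point of comparison is the proof of the analogous Feldman--Katok statement (Theorem 3.2). Your argument is correct in outline. The upper bound via a small-diameter refining partition, Shannon--McMillan--Breiman, Birkhoff and Egorov is the standard route and is essentially what the cited source does. For the lower bound you take a genuinely different path: instead of the direct Katok-style partition-counting argument of \cite{he2004definition} (and of the paper's own Steps 2--3 for Theorem 3.2, which fix a partition $\xi$ with closed atoms, bound the number of $\xi^n$-atoms meeting each ball, and push the estimate through $H_\mu(\xi^n)+\int S_n\varphi\,\mathrm{d}\mu$), you invoke Brin--Katok together with Birkhoff to get, on a single large set, the volume bound $\mu(B_{d_n}(y,2\varepsilon))\le e^{-n(h_\mu(T)-\delta)}$ and then count balls. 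This is legitimate and shorter, at the price of taking Brin--Katok as a black box; the direct argument is longer but self-contained, and it is the one that transfers to the Feldman--Katok metric, where no Brin--Katok-type ball-measure estimate is available a priori (the paper only derives such an estimate in its appendix as a consequence). Two small points to tidy: the admissible coverings in $S_{d_n}(\varphi,\mu,\varepsilon)$ have union of measure $>1-\varepsilon$, not $>1-\varepsilon/4$, so the relevant intersection with your set $E$ has measure $>1-\varepsilon-\varepsilon/4$ (this only changes the harmless prefactor in the count); and the uniformization of Brin--Katok over $y\in E$, $n\ge N$ and $\varepsilon<\varepsilon_0$ should be justified via the monotonicity of $\mu(B_{d_n}(y,\varepsilon))$ in $\varepsilon$, which you use only implicitly.
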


In fact, when $q=1$,  the measure-theoretic pressure version of Katok's entropy formula associated with  mean metric is still valid. Actually, by \cite[Lemma 2.1]{huang2019measure1} and  a similar approach in the proof of Theorem \ref{3.2}, we can get the following result.
\begin{theorem} \label{2.4}
 Let  $(X, T)$  be a TDS. For any  $\mu \in E(X, T)$  and  $\varphi \in C(X, \mathbb{R}) $, then
$$
h_{\mu}(T)+\int \varphi \mathrm{d} \mu=\lim _{\varepsilon \rightarrow 0} \liminf _{n \rightarrow \infty} \frac{1}{n} \log S_{\bar{d}_{n}}(\varphi, \mu, \varepsilon)=\lim _{\varepsilon \rightarrow 0} \limsup _{n \rightarrow \infty} \frac{1}{n} \log S_{\bar{d}_{n}}(\varphi, \mu, \varepsilon) .
$$
\end{theorem}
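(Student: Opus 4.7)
The plan is to reduce Theorem \ref{2.4} to Theorem 2.3 (the Bowen-metric version of Katok's formula) by sandwiching $S_{\bar{d}_n}(\varphi, \mu, \varepsilon)$ between suitable quantities involving $S_{d_n}(\varphi, \mu, \cdot)$, mirroring the approach that will be used for the modified Feldman--Katok metric in Theorem \ref{3.2}. The upper bound is immediate: the pointwise inequality $\bar{d}_n(x,y) \leq d_n(x,y)$ yields the inclusion $B_{d_n}(x,\varepsilon) \subseteq B_{\bar{d}_n}(x,\varepsilon)$ for every $x \in X$, so any admissible family of centers for $S_{d_n}(\varphi, \mu, \varepsilon)$ is also admissible for $S_{\bar{d}_n}(\varphi, \mu, \varepsilon)$ with the same weighted sum. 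Hence $S_{\bar{d}_n}(\varphi, \mu, \varepsilon) \leq S_{d_n}(\varphi, \mu, \varepsilon)$, and Theorem 2.3 after $\limsup_n$ and $\varepsilon \to 0$ yields
$$
\lim_{\varepsilon \to 0} \limsup_{n \to \infty} \frac{1}{n} \log S_{\bar{d}_n}(\varphi, \mu, \varepsilon) \leq h_\mu(T) + \int \varphi\, d\mu.
$$

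For the lower bound, the strategy is to dominate $S_{d_n}(\varphi, \mu, \eta)$ by $S_{\bar{d}_n}(\varphi, \mu, \eta^2)$ up to a sub-exponential error, relying on \cite[Lemma 2.1]{huang2019measure1}. The quantitative content I intend to extract from that lemma is: for every $0 < \eta < 1$ and $x \in X$, the mean ball $B_{\bar{d}_n}(x, \eta^2)$ is covered by at most $N_n(\eta)$ Bowen balls of radius $\eta$, with $\frac{1}{n}\log N_n(\eta) \to \gamma(\eta)$ and $\gamma(\eta) \to 0$ as $\eta \to 0$. The intuition is that $\bar{d}_n(x,y) < \eta^2$ forces, via a direct counting (Markov-type) argument, the set of indices $i \in [n]$ with $d(T^i x, T^i y) \geq \eta$ to have cardinality less than $\eta n$; decomposing $B_{\bar{d}_n}(x, \eta^2)$ according to this bad-index set and replacing the corresponding coordinates by elements of a finite $\eta$-net of $X$ yields a covering of exponential rate $\gamma(\eta) \approx H(\eta) + \eta \log N(X,d,\eta)$, which is $o(1)$ as $\eta \to 0$. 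Combining this covering with the uniform continuity of $\varphi$, which bounds the oscillation of $S_n \varphi$ on any Bowen ball of radius $\eta$ by $n\, \operatorname{Var}(\varphi, \eta)$, I obtain after a minor adjustment of the $\mu$-tolerance
$$
S_{d_n}(\varphi, \mu, \eta) \leq N_n(\eta)\, e^{n \operatorname{Var}(\varphi, \eta)}\, S_{\bar{d}_n}(\varphi, \mu, \eta^2).
$$

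Taking logarithms, dividing by $n$, passing to $\liminf_{n \to \infty}$, and then letting $\eta \to 0$ (using the continuity of $\varphi$ to drive $\operatorname{Var}(\varphi, \eta) \to 0$) yields
$$
h_\mu(T) + \int \varphi\, d\mu \leq \lim_{\varepsilon \to 0} \liminf_{n \to \infty} \frac{1}{n} \log S_{\bar{d}_n}(\varphi, \mu, \varepsilon),
$$
which, combined with the upper bound, closes the chain of inequalities and forces $\liminf$ and $\limsup$ to coincide. The main technical obstacle is the quantitative covering step: one must simultaneously track the combinatorial cost of choosing the bad index set and the metric cost at those indices so that the aggregate growth rate $\gamma(\eta)$ is genuinely subexponential and vanishes with $\eta$. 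This is precisely what \cite[Lemma 2.1]{huang2019measure1} is designed to deliver; once it is in hand, the remainder of the argument is a direct transcription of the reasoning I plan for Theorem \ref{3.2}, with $d_{FK_n^q}$ replaced by $\bar{d}_n$ and $q = 1$ throughout.
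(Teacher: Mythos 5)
Your overall strategy---the trivial upper bound from $\bar{d}_n\le d_n$, and a lower bound obtained by covering mean-metric balls with Bowen balls so as to reduce everything to Theorem 2.3---is sound and is essentially what the paper intends here: the authors give no proof of Theorem \ref{2.4} beyond citing \cite[Lemma 2.1]{huang2019measure1} and ``a similar approach'' to Theorem \ref{3.2}. The one genuine problem is your quantitative formulation of the covering step. You claim that $B_{\bar{d}_n}(x,\eta^2)$ is covered by $N_n(\eta)$ Bowen $\eta$-balls with $\frac1n\log N_n(\eta)\to\gamma(\eta)\approx H(\eta)+\eta\log N(X,d,\eta)$ and $\gamma(\eta)\to0$ as $\eta\to0$. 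The Markov bound does give a bad index set of size $<\eta n$, and the covering cost is indeed of order $\binom{n}{[\eta n]}\cdot N(X,d,\eta)^{\eta n}$; but for a general compact metric space $\eta\log N(X,d,\eta)$ need \emph{not} tend to $0$ (the covering numbers $N(X,d,\eta)$ can grow super-exponentially in $1/\eta$, e.g.\ for infinite-dimensional compacta), so $\gamma(\eta)$ as you define it need not vanish and your inequality $S_{d_n}(\varphi,\mu,\eta)\le N_n(\eta)e^{n\operatorname{Var}(\varphi,\eta)}S_{\bar{d}_n}(\varphi,\mu,\eta^2)$ does not close the argument. The correct form of the comparison lemma decouples the two radii: for fixed $\eta$ and $\tau>0$ choose the mean-ball radius $\delta$ so small that $-\frac{\delta}{\eta}\log\frac{\delta}{\eta}-(1-\frac{\delta}{\eta})\log(1-\frac{\delta}{\eta})+\frac{\delta}{\eta}\log N(X,d,\eta)<\tau$; then each $B_{\bar{d}_n}(x,\delta)$ is covered by at most $e^{n\tau}$ Bowen $2\eta$-balls, and your chain of inequalities goes through upon letting $\eta\to0$ and then $\tau\to0$. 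That decoupled statement is what the cited lemma is actually built to deliver, so your proof is repairable by quoting it accurately rather than relying on the $\eta^2$ heuristic. (You also drop an error term $2\eta n\|\varphi\|$ from the bad coordinates when comparing $S_n\varphi$ at the new centers with $S_n\varphi(x_i)$; that one is harmless.)

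It is also worth noting that the route the paper follows for the analogous lower bound in Theorem \ref{3.2} (Steps 2--3) sidesteps this issue entirely: one fixes a finite partition $\xi$ with $k+1$ atoms whose closed pieces are $b$-separated, shows that two points of $W_n$ that are close in the relevant metric have $\xi^n$-names agreeing off a set of small density, and counts names at cost $\bigl(C_n^{[(1-2r-2\varepsilon)n]}\bigr)^2(k+1)^{[(2r+2\varepsilon)n]+1}$---the price per bad coordinate is $\log(k+1)$ with $k$ fixed before $\varepsilon\to0$, so no covering number of $X$ ever enters. Your covering reduction is shorter when the comparison lemma is available off the shelf; the partition argument is self-contained and immune to the pitfall above.
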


\begin{remark}
Notice that Theorem \ref{2.4} is different with \cite[Theorem 1.1]{huang2019measure1}.
\end{remark}

\section{Measure-theoretic pressure associated with modified Feldman-katok metric $d_{F K_{n}^{q}}$}

In this section, we will study measure-theoretic pressure associated with modified Feldman-katok metric $d_{F K_{n}^{q}}$.  Now we recall that the edit distance  $\bar{f}_{n}$  is defined by:
$$
\bar{f}_{n}\left(x_{0} x_{1} \ldots x_{n-1}, y_{0} y_{1} \ldots y_{n-1}\right)=1-\frac{k}{n}
$$
where  $k $ is the largest integer such that there exist
$
0 \leq i_{1}<\ldots<i_{k} \leq n-1,0 \leq j_{1}<\ldots<j_{k} \leq n-1
$
and  $x_{i_{s}}=y_{j_{s}} $ for $ s=1, \ldots, k .$

 Let  $\mu \in M(X, T) $ and  $\xi=\left\{A_{1}, \ldots, A_{|\xi|}\right\}$  be a finite partition of $ X $. We can identify the elements in  $\bigvee_{i=0}^{n-1} T^{-i} \xi$  and  $\{1, \ldots,|\xi|\}^{n}$  by
$
\bigcap_{i=0}^{n-1} T^{-i} A_{t_{i}}=\left(t_{0}, \ldots, t_{n-1}\right) 
$. Thus, for  $w \in\{1, \ldots,|\xi|\}^{n}$,  we can talk about  $\mu(w)$  and for  $A, B \in \bigvee_{i=0}^{n-1} T^{-i} \xi$  we can talk about  $\bar{f}_{n}(A, B) $. For $ x \in X $, the element of  $\xi$  containing  $x$  is denoted by  $\xi(x) $. We can simply replace $\bigvee_{i=0}^{n-1} T^{-i }\xi$ with  $\xi^{n}$. 

In 2023, Cai et al. \cite{cai2023feldman1} studied measure-theoretic entropy for ergodic measures in Feldman-Katok  metric, and they got a  measure-theoretic entropy formula as follows.
\begin{theorem}\cite{cai2023feldman1} \label{cai2023}
	Let  $(X, T)$  be a TDS. For any  $\omega		 \in {E}(X, T)$, then
	$$
	h_{\omega		}(T)=\lim _{\epsilon \rightarrow 0} \liminf _{n \rightarrow \infty} \frac{1}{n} \log {S}_{d_{F K_{n}}}(0, \omega		, \epsilon)=\lim _{\epsilon \rightarrow 0} \limsup _{n \rightarrow \infty} \frac{1}{n} \log {S}_{d_{F K_{n}}}(0, \omega		, \epsilon).
	$$
\end{theorem}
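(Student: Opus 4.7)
The plan is to prove
$\lim_{\epsilon\to 0}\limsup_{n\to\infty}\tfrac{1}{n}\log S_{d_{FK_n}}(0,\omega,\epsilon)\leq h_{\omega}(T)$
together with
$h_{\omega}(T)\leq\lim_{\epsilon\to 0}\liminf_{n\to\infty}\tfrac{1}{n}\log S_{d_{FK_n}}(0,\omega,\epsilon)$, which forces both limits in the statement to equal $h_{\omega}(T)$. The upper half is essentially free: the identity bijection on $[n]$ witnesses $\mathscr{F}_{n,\delta}(x,y)=0$ whenever $d_{n}(x,y)<\delta$, so $d_{FK_n}\leq d_n$; hence $B_{d_n}(x,\epsilon)\subseteq B_{d_{FK_n}}(x,\epsilon)$, any Bowen cover is an FK-cover, and $S_{d_{FK_n}}(0,\omega,\epsilon)\leq S_{d_n}(0,\omega,\epsilon)$. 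The classical Katok entropy formula (the $\varphi\equiv 0$ specialisation of the Bowen-metric pressure formula recalled just before Theorem \ref{2.4}) then bounds the right-hand side by $h_{\omega}(T)$.

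For the lower half I would imitate the partition plus Shannon--McMillan--Breiman proof of the classical Katok formula, with edit-distance combinatorics replacing the Hamming-distance step. Fix $\tau>0$ and pick a finite partition $\xi=\{A_1,\ldots,A_L\}$ with $\omega(\partial\xi)=0$, $\diam A_j<\eta$ for a small auxiliary $\eta>0$, and $h_{\omega}(T,\xi)>h_{\omega}(T)-\tau$. By the Shannon--McMillan--Breiman theorem applied to the ergodic $\omega$, for every $\rho>0$ there exist $N_1$ and sets $G_n$ with $\omega(G_n)>1-\rho$ for all $n\geq N_1$ such that $\omega(\xi^n(x))\leq e^{-n(h_{\omega}(T,\xi)-\rho)}$ on $G_n$. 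The key geometric lemma is the following: if $x,y$ are typical points whose first $n$ iterates avoid a small neighbourhood of $\partial\xi$ and $d_{FK_n}(x,y)<\epsilon$ with $\epsilon$ much smaller than $\eta$, then the $(n,\epsilon)$-match $\pi$ forces $T^{i}x$ and $T^{\pi(i)}y$ into a common atom of $\xi$ for every $i\in D(\pi)$, so $\bar{f}_n(\xi^n(x),\xi^n(y))\leq 1-|\pi|/n<\epsilon$.

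Consequently, the restriction of $B_{d_{FK_n}}(x,\epsilon)$ to the typical set is covered by those $\xi^n$-cylinders lying within $\bar{f}_n$-distance $\epsilon$ of $\xi^n(x)$, and the number of such cylinders is at most $\binom{n}{\lfloor\epsilon n\rfloor}^{2}L^{\lceil\epsilon n\rceil}$, whose exponential growth rate $g(\epsilon,L)$ tends to $0$ as $\epsilon\to 0$. Combining this with the SMB bound, each such FK-ball carries $\omega$-mass on $G_n$ at most $e^{n(g(\epsilon,L)+\rho-h_{\omega}(T,\xi))}$, so covering $G_n$ (whose mass exceeds $1-\epsilon$ once $\rho\leq\epsilon$) forces $S_{d_{FK_n}}(0,\omega,\epsilon)\geq e^{n(h_{\omega}(T,\xi)-g(\epsilon,L)-\rho-o(1))}$. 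Sending $n\to\infty$, then $\epsilon\to 0$, then $\tau,\rho\to 0$ delivers $h_{\omega}(T)$ as a lower bound for the liminf.

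The main obstacle is the partition-boundary bookkeeping that glues these ingredients together: one must quantitatively control the proportion of indices $i$ for which $T^{i}x$ (or $T^{\pi(i)}y$) lies in a thin neighbourhood of $\partial\xi$, via the Birkhoff ergodic theorem applied to the indicator of a thickened boundary, so that the matching-to-names implication holds uniformly on a set of pairs of full enough $\omega$-measure. Coordinating the three small parameters $(\tau,\rho,\epsilon)$ through the iterated limits is where all the technical care is required; the edit-distance combinatorial estimate itself is an elementary Stirling-type computation.
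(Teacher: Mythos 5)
Your proposal is correct, and its skeleton coincides with the paper's (the paper cites this statement from Cai--Li, but proves the pressure version in Theorem \ref{3.2}, whose $\varphi\equiv 0$ specialisation is exactly this theorem): the upper bound via $d_{FK_n}\le d_n$ and the classical Katok formula is identical, and the heart of the lower bound --- FK-closeness of orbits forces $\bar f_n$-closeness of $\xi^n$-names once the partition boundary is controlled, followed by the $\binom{n}{k}^2 L^{O(\varepsilon n)}$ cylinder count and Stirling --- is the same combinatorial engine as the paper's Claim 3.2.1. Where you genuinely diverge is in the two supporting mechanisms. First, you handle the boundary with a null-boundary partition, a thickened boundary, and Birkhoff; the paper instead invokes Walters' Lemma 4.15/Corollary 4.12.1 to refine $\eta$ into closed sets $B_1,\dots,B_k$ with pairwise separation $b>0$ plus a small exceptional atom $B_{k+1}$, and excludes (via a Markov-inequality/Birkhoff-integral estimate) the set $E_n$ of points spending too much orbit time in $B_{k+1}$. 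These are interchangeable, though the closed-refinement route gives you a clean uniform constant $b$ against which to compare $2\varepsilon$, whereas your version needs the extra bookkeeping you flag at the end. Second, and more substantively, you run the lower bound through Shannon--McMillan--Breiman (bounding the mass of each FK-ball by the cylinder count times $e^{-n(h-\rho)}$ and then counting balls needed to cover measure $1-\varepsilon$), while the paper avoids SMB entirely and instead bounds $H_\mu(\xi^n)$ directly by $\log$ of the number of cylinders meeting the union of FK-balls, via the standard conditioning on $W_n$ and $H_\mu(\alpha)\le\log|\alpha|$. Your SMB route is Katok's original style and is perfectly valid for ergodic $\omega$; the paper's $H_\mu$ route is what lets it carry the potential $\varphi$ through painlessly in Theorem \ref{3.2} (the term $\sum e^{S_n\varphi(x_i)}$ slots into the entropy inequality, whereas in the SMB approach one would have to weight the ball masses by $e^{S_n\varphi}$ and control $\varphi$ along matched orbits separately). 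For the entropy case $\varphi=0$ asked here, both arguments close without difficulty.
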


Next, we shall explore measure-theoretic pressure for ergodic measures in Feldman-Katok  metric, and we obtain a  measure-theoretic pressure formula as follows.
\begin{theorem} \label{3.2}
 Let  $(X, T)$  be a TDS. For any  $\mu \in E(X, T)$  and  $\varphi \in C(X, \mathbb{R}) $, then
$$
h_{\mu}(T)+\int \varphi \mathrm{d} \mu=\lim _{\varepsilon \rightarrow 0} \liminf _{n \rightarrow \infty} \frac{1}{n} \log S_{d_{FK_{n}}}(\varphi, \mu, \varepsilon)=\lim _{\varepsilon \rightarrow 0} \limsup _{n \rightarrow \infty} \frac{1}{n} \log S_{d_{FK_{n}}}(\varphi, \mu, \varepsilon) .
$$
\end{theorem}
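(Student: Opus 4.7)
My plan is to establish the upper bound $\lim_{\varepsilon\to 0}\limsup_n \tfrac{1}{n}\log S_{d_{FK_n}}(\varphi,\mu,\varepsilon)\leq h_\mu(T)+\int\varphi\,d\mu$ and the lower bound $\lim_{\varepsilon\to 0}\liminf_n \tfrac{1}{n}\log S_{d_{FK_n}}(\varphi,\mu,\varepsilon)\geq h_\mu(T)+\int\varphi\,d\mu$ separately; the two intermediate quantities in the statement are then pinched to the common target. The upper bound is essentially free, since the identity bijection on $[n]$ is always an $(n,\delta)$-match of $x,y$ as soon as $d_n(x,y)<\delta$, so $d_{FK_n}\leq d_n$ pointwise and every Bowen ball $B_{d_n}(x,\varepsilon)$ sits inside the Feldman-Katok ball $B_{d_{FK_n}}(x,\varepsilon)$. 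Consequently $S_{d_{FK_n}}(\varphi,\mu,\varepsilon)\leq S_{d_n}(\varphi,\mu,\varepsilon)$, and the classical Katok pressure formula \cite{he2004definition} delivers the upper bound.

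For the lower bound I would adapt the scheme used to prove Theorem \ref{cai2023}, attaching the weight $\varphi$. Fix $\eta>0$ and choose a finite partition $\xi$ with $\mu(\partial\xi)=0$, $h_\mu(T,\xi)>h_\mu(T)-\eta$, and $\operatorname{Var}(\varphi,\diam(\xi))<\eta$. By the Shannon--McMillan--Breiman theorem and the Birkhoff ergodic theorem applied to the ergodic measure $\mu$, for all sufficiently large $n$ the set
$$A_n=\Bigl\{x\in X: \mu(\xi^n(x))\leq e^{-n(h_\mu(T,\xi)-\eta)},\;\bigl|S_n\varphi(x)-n\int\varphi\,d\mu\bigr|<n\eta\Bigr\}$$
has $\mu(A_n)>1-\varepsilon/4$. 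The pressure-side variation estimate is the following: if $y\in B_{d_{FK_n}}(x,\varepsilon)$, some $(n,\delta)$-match $\pi$ with $\delta<\varepsilon$ satisfies $|D(\pi)|\geq n(1-\varepsilon)$, so splitting $S_n\varphi(x)-S_n\varphi(y)$ along matched and unmatched indices gives
$$|S_n\varphi(x)-S_n\varphi(y)|\leq n\operatorname{Var}(\varphi,\varepsilon)+2\varepsilon n\|\varphi\|=:n\tau(\varepsilon),\qquad \tau(\varepsilon)\to 0.$$

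The core combinatorial step, copying the argument of \cite{cai2023feldman1}, translates the orbit-level match $\pi$ into the edit-distance bound $\bar{f}_n(\xi^n(x),\xi^n(y))<\varepsilon+o(1)$ (with $\diam(\xi)<\varepsilon$, matched coordinates lying outside a thin $\varepsilon$-neighbourhood of $\partial\xi$ land in a common atom of $\xi$). Since the number of partition words within $\bar{f}_n$-distance $\varepsilon$ of a fixed word is at most $\binom{n}{\lceil\varepsilon n\rceil}^2|\xi|^{\lceil\varepsilon n\rceil}=e^{n\psi(\varepsilon)}$ with $\psi(\varepsilon)\to 0$, summing their measures yields
$$\mu\bigl(B_{d_{FK_n}}(x,\varepsilon)\cap A_n\bigr)\leq e^{n\psi(\varepsilon)}\,e^{-n(h_\mu(T,\xi)-\eta)}.$$
Given any admissible family $\{B_{d_{FK_n}}(x_i,\varepsilon)\}_{i=1}^{k}$ with total $\mu$-measure exceeding $1-\varepsilon$, discard the balls disjoint from $A_n$ and let $k'$ count the survivors; then $\mu(A_n)-\varepsilon\leq k'\,e^{n\psi(\varepsilon)}e^{-n(h_\mu(T,\xi)-\eta)}$, so $k'\geq \tfrac{1}{2}e^{n(h_\mu(T,\xi)-\eta-\psi(\varepsilon))}$ for $n$ large. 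For each surviving $i$ pick $x_i'\in B_{d_{FK_n}}(x_i,\varepsilon)\cap A_n$; the variation estimate yields $S_n\varphi(x_i)\geq n(\int\varphi\,d\mu-\eta-\tau(\varepsilon))$. Summing gives $\sum_i e^{S_n\varphi(x_i)}\geq \tfrac{1}{2}e^{n(h_\mu(T,\xi)+\int\varphi\,d\mu-2\eta-\psi(\varepsilon)-\tau(\varepsilon))}$, and sending $\varepsilon\to 0$ and then $\eta\to 0$ closes the lower bound.

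The main obstacle is the translation from an $(n,\delta)$-match at the orbit level to an edit-distance bound on $\xi$-names: matched coordinates $(i,\pi(i))$ with $d(T^i x,T^{\pi(i)}y)<\delta$ can still place $T^i x$ and $T^{\pi(i)}y$ in distinct atoms of $\xi$ when either lies within $\delta$ of $\partial\xi$. The standard remedy is a two-scale choice $\diam(\xi)\ll\varepsilon$ combined with a Birkhoff-type density argument on $\{x:d(x,\partial\xi)<\delta\}$, exactly as executed in \cite{cai2023feldman1}; beyond that, the novelty is only the uniform Birkhoff-sum comparison under an $(n,\delta)$-match, which is the routine split displayed above.
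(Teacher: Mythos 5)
Your proposal is correct, and while your upper bound (the identity map is an $(n,\delta)$-match, so $d_{FK_n}\le d_n$, Bowen balls sit inside Feldman--Katok balls, and the Bowen-metric pressure formula of Theorem 2.3 applies) coincides exactly with the paper's Step 1, your lower bound takes a genuinely different route. The paper follows Katok's original scheme: it replaces the given partition $\eta$ by a partition $\xi=\{B_1,\dots,B_k,B_{k+1}\}$ into closed sets with pairwise distance at least $b>0$ plus a remainder of measure less than $r^2$, uses Birkhoff only to control the set $E_n$ of points whose orbits spend too much time outside $K=\bigcup_{i=1}^k B_i$, and then bounds $H_\mu(\xi^n)+\int S_n\varphi\,\mathrm{d}\mu$ directly through the conditional entropy on the good set $W_n$, the maximizers $x_G$ of $S_n\varphi$ on each atom, and the edit-distance count of Claim 3.2.1 --- no Shannon--McMillan--Breiman theorem is needed, only $h_\mu(T,\xi)=\lim_n\frac1nH_\mu(\xi^n)$. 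You instead run the SMB/ball-measure argument: a good set $A_n$ on which atoms of $\xi^n$ have measure at most $e^{-n(h_\mu(T,\xi)-\eta)}$ and Birkhoff sums of $\varphi$ are controlled, the same edit-distance count to bound $\mu(B_{d_{FK_n}}(x,\varepsilon)\cap A_n)$, and a pigeonhole estimate on the number of surviving balls in an admissible cover. The combinatorial core (translating an $(n,\delta)$-match into a $\bar f_n$-bound on $\xi$-names and counting words within a prescribed edit distance via Stirling) and the Birkhoff-sum comparison $|S_n\varphi(x)-S_n\varphi(y)|\le n\operatorname{Var}(\varphi,\varepsilon)+2\varepsilon n\|\varphi\|$ are common to both. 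The one point you should tighten: you resolve the boundary problem with a null-boundary partition plus a Birkhoff density bound on the $\delta$-neighbourhood of $\partial\xi$, and that Birkhoff condition must be folded explicitly into the definition of $A_n$ (you mention it only as an afterthought), whereas the paper's separated closed sets make matched points in $K$ land in a common $B_i$ automatically once $2\varepsilon<b$. Your route buys a shorter and more conceptual lower bound at the price of invoking SMB; the paper's route is more elementary and is the template reused for the $q$-modified metrics in Theorem \ref{3.4}.
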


\begin{proof}
 Step 1: First we show that for any  $\mu \in E(X, T)$,
$$
\lim _{\varepsilon \rightarrow 0} \limsup _{n \rightarrow \infty} \frac{1}{n} \log S_{d_{FK_{n}}}(\varphi, \mu, \varepsilon) \leq h_{\mu}(T)+\int \varphi \mathrm{d} \mu .
$$

Given  $\varepsilon >0$ and $\mu \in E(X, T)$.  It is obvious that  $d_{F K_{n}}(x, y) \leq {d}_{n}(x, y)$  and then  $B_{d_{n}}(x, \varepsilon) \subset B_{d_{F K_{n}}}(x, \varepsilon) $.   For any  $\tau>0$  and  sufficiently large $n$, according to Theorem 2.3, there exists a finite subset  $F_{n}$  of  $X$  such that
$$
\sum_{x_{i} \in F_{n}} e^{S_{n} \varphi\left(x_{i}\right)} \leq e^{n\left(h_{\mu}(T)+\int \varphi \mathrm{d} \mu+\tau\right)},
$$
and there is an open subset $ K_{n}$  of $ X $ with  $\mu\left(K_{n}\right)>1-\varepsilon$  such that for any  $x \in K_{n} $ there is  $y \in F_{n} $ with  $d_{n}(x, y)<\varepsilon $, which  yields that  $d_{F K_{n}}(x, y)\leq d_{n}(x, y)<\varepsilon $. It  follows that
$$
S_{d_{F K_{n}}}(\varphi, \mu, \varepsilon) \leq  \sum_{x_{i} \in F_{n}} e^{S_{n} \varphi\left(x_{i}\right)} \leq e^{n\left(h_{\mu}(T)+\int \varphi \mathrm{d} \mu+\tau\right)},
$$
and hence we obtain
$$
\begin{aligned}
	\lim _{\varepsilon \rightarrow 0} \limsup _{n \rightarrow \infty} \frac{1}{n} \log S_{d_{FK_{n}}}(\varphi, \mu, \varepsilon) & \leq \lim _{\varepsilon \rightarrow 0} \limsup _{n \rightarrow \infty} \frac{1}{n} \log S_{d_{n}}(\varphi, \mu, \varepsilon) \\
	& \leq h_{\mu}(T)+\int \varphi \mathrm{d} \mu+\tau.
\end{aligned}
$$

Let  $\tau \rightarrow 0 $, one has $$
\lim _{\varepsilon \rightarrow 0} \limsup _{n \rightarrow \infty} \frac{1}{n} \log S_{d_{FK_{n}}}(\varphi, \mu, \varepsilon) \leq h_{\mu}(T)+\int \varphi \mathrm{d} \mu .
$$
Step 2: Now we shall state that for any  $\mu \in E(X, T)$, then 
$$
h_{\mu}(T)+\int \varphi \mathrm{d} \mu \leq\lim _{\varepsilon \rightarrow 0} \liminf _{n \rightarrow \infty} \frac{1}{n} \log S_{d_{FK_{n}}}(\varphi, \mu, \varepsilon).
$$

Given a Borel partition  $\eta=\left\{A_{1}, \ldots, A_{k}\right\} $ of  $X$  and  $0<\delta<1 $, it is sufficient to show
$$
h_{\mu}(T, \eta)+\int \varphi \mathrm{d} \mu \leq \lim _{\varepsilon \rightarrow 0} \liminf _{n \rightarrow \infty} \frac{1}{n} \log S_{d_{FK_{n}}}(\varphi, \mu, \varepsilon)+2 \delta.
$$
Take $ 0<r<\frac{1}{2}$  with
$$
-4 r \log 2 r-2(1-2 r) \log (1-2 r)+4 r \log (k+1)+2 r\|\varphi\|<\delta .
$$

According to  \cite{walters1982introduction}  Lemma 4.15 and Corollary 4.12.1,  there exists a Borel partition  $\xi=\left\{B_{1}, \ldots, B_{k}, B_{k+1}\right\}$  of  $X$  associated with  $\eta$  such that  $B_{i} \subset A_{i}$  is closed for  $1 \leq i \leq k$  and  $B_{k+1}=X \backslash K $ with  $K=\bigcup_{i=1}^{k} B_{i}, \mu\left(B_{k+1}\right)<r^{2} $ and
$$
h_{\mu}(T, \eta)+\int \varphi \mathrm{d} \mu \leq h_{\mu}(T, \xi)+\delta+\int \varphi \mathrm{d} \mu.
$$

It is certain that  $B_{i} \cap B_{j}=\emptyset, 1 \leq i<j \leq k$  and  $\mu(K)>1-r^{2} $. Let  $b= \min _{1 \leq i<j \leq k} d\left(B_{i}, B_{j}\right) $. Then  $b>0 $. Given  $\gamma>0,0<\varepsilon<\min \left\{1-2 r, \frac{1}{2} b(1-2 r)\right\} $. For  $ n \in \mathbb{N}$  and  $\gamma \geq 0 $, by the definition of  $S_{d_{FK_{n}}}(\varphi, \mu, \varepsilon)$,  there are  $x_{1}, \ldots, x_{m(n)} \in X$  such that  $\mu\left(\bigcup_{i=1}^{m(n)} B_{d_{FK_{n}}}\left(x_{i}, \varepsilon\right)\right)>1-\varepsilon $, where
$$
\sum_{i=1}^{m(n)} e^{S_{n} \varphi\left(x_{i}\right)} \leq S_{d_{FK_{n}}}(\varphi, \mu, \varepsilon)+\gamma .
$$

Let  $F_{n}=\bigcup_{i=1}^{m(n)} B_{d_{FK_{n}}}(x_{i}, \varepsilon) $. Then  $\mu(F_{n})>1-\varepsilon $.  Take $	E_{n}=\left\{x \in X: \frac{\left|i\in [n]:T^ix\in K\right|}{n} \leq 1-r\right\} . $
Notice that 
$$\int_{X} \frac{\left|i\in [n]:T^ix\in K\right|}{n} \mathrm{~d} \mu(x)=\int_{X} \frac{1}{n} \sum_{i=0}^{n-1} 1_{K}\left(T^{i} x\right) \mathrm{d} \mu(x)=\mu(K)>1-r^{2} .$$
It follows that
$$
(1-r) \mu\left(E_{n}\right)+1-\mu\left(E_{n}\right) \geq \int_{X} \frac{\left|i\in [n]:T^ix\in K\right|}{n} \mathrm{~d} \mu(x)>1-r^{2},
$$
which indicates that  $\mu\left(E_{n}\right)<r $. Put  $W_{n}=\left(K \cap F_{n}\right) \backslash E_{n} $. Then
$$
\mu\left(W_{n}\right)>1-r^{2}-r-\varepsilon>1-2 r-\varepsilon
$$
and for  $z \in W_{n} $,
$$
\frac{\left|i\in [n]:T^iz\in K\right|}{n}>1-r.
$$

{\bf Claim 3.2.1} Let $\xi_{n,i}=\{A \in \xi^{n}: A \cap B_{d_{F K_{n}}}(x_{i}, \varepsilon) \cap W_{n} \neq \emptyset\}$, then
$$
|\xi_{n,i}| \leq(C_{n}^{[(1-2r-2 \varepsilon) n]})^{2} \cdot(k+1)^{[(2 r+2 \varepsilon) n]+1} .$$

{\bf Proof of claim 3.2.1}  Let
$
x \in A_{1} \cap B_{d_{F K_{n}}}\left(x_{i}, \varepsilon\right) \cap W_{n}$,  $y \in A_{2} \cap B_{d_{F K_{n}}}\left(x_{i}, \varepsilon\right) \cap W_{n},
$
where  $A_{1}, A_{2} \in \xi^{n} $. Then  $d_{F K_{n}}(x, y)<2 \varepsilon<b $. Denote
$$
D_{x}=\left\{0 \leq j \leq n-1: T^{j} x \in K\right\}, D_{y}=\left\{0 \leq j \leq n-1: T^{j} y \in K\right\} .
$$
Then
$
\left|D_{x}\right|>(1-r) n,\left|D_{y}\right|>(1-r) n .
$ Since  $d_{F K_{n}}(x, y)<2 \varepsilon $, let  $\pi$  be an  $(n, 2 \varepsilon)$-match of  $x$  and  $y$  with  $|\pi|>(1-2 \varepsilon) n $. Note that
$$
\left|\pi^{-1}\left(\pi\left(D(\pi) \cap D_{x}\right) \cap D_{y}\right)\right|>(1-2r-2 \varepsilon) n .
$$

Let  $\tilde{D}=\pi^{-1}\left(\pi\left(D(\pi) \cap D_{x}\right) \cap D_{y}\right) $, and  then  $\left|\tilde{D}\right|>(1-2 r-2 \varepsilon) n $. For every  $j \in \tilde{D}$, we have
$$
d\left(T^{j} x, T^{\pi(j)} y\right)<2 \varepsilon<b,
$$
and  then $T^{j} x, T^{\pi(j)} y \in K $. This implies that  $T^{j} x, T^{\pi(j)} y$  must be in one of the same element of $ \left\{B_{1}, \ldots, B_{k}\right\} $. It follows that
$
\bar{f}_{n}\left(A_{1}, A_{2}\right) \leq 2 r+2 \varepsilon.
$
Notice that the number of  $A $ satisfying
$
\bar{f}_{n}\left(A_{1}, A\right) \leq 2 r+2 \varepsilon
$
is not more than
$$
\left(C_{n}^{[(1-2 r-2 \varepsilon) n]}\right)^{2} \cdot(k+1)^{[(2 r+2 \varepsilon) n]+1} .
$$
We finish the claim 3.2.1.

Step 3: Lastly, we shall deal with the remaining part,  $$h_{\mu}(T, \eta)+\int \varphi \mathrm{d} \mu \leq \lim _{\varepsilon \rightarrow 0} \liminf _{n \rightarrow \infty} \frac{1}{n} \log S_{d_{FK_{n}}}(\varphi, \mu, \varepsilon)+2 \delta.$$
Take
$
\lambda(G):=\sup \left\{\left(S_{n} \varphi\right)(x): x \in G\right\} .$
For each  $G \in \xi^n \cap W_{n} $, we can choose some  $x_{G} \in \overline{G} $ so that  $S_{n} \varphi\left(x_{G}\right)=\lambda(G) $. For  $ n \in \mathbb{N} $, one has
$$
\begin{aligned}
	H_{\mu}(\xi^n )+\int S_{n} \varphi \mathrm{d} \mu
	\leq H_{\mu}(\xi^n  \vee\{W_{n}, X \backslash W_{n}\})+\int_{X} S_{n} \varphi \mathrm{d} \mu \\
	\leq-\mu(W_{n}) \sum_{U \in \xi^n} \frac{\mu(U \cap W_{n})}{\mu(W_{n})} \log \frac{\mu(U \cap W_{n})}{\mu(W_{n})}
	-\mu(X \backslash W_{n}) \sum_{U \in \xi^n } \frac{\mu(U \cap(X \backslash W_{n}))}{\mu\left(X \backslash W_{n}\right)} \log \frac{\mu(U \cap(X \backslash W_{n}))}{\mu\left(X \backslash W_{n}\right)} \\
	+\int_{W_{n}} S_{n} \varphi(x) \mathrm{d} \mu+\int_{X \backslash W_{n}} S_{n} \varphi(x) \mathrm{d} \mu-\mu(W_{n}) \log \mu(W_{n})
	-(1-\mu(W_{n})) \log (1-\mu(W_{n})) \\
	\leq \mu(W_{n}) H_{\mu_{W_{n}}}(\xi^n  \cap W_{n})+\int_{W_{n}} S_{n} \varphi(x) \mathrm{d} \mu
	+\mu(X \backslash W_{n}) \log \left|\xi^n \right|+\mu(X \backslash W_{n}) n\|\varphi\|+\log 2\\
	\leq H_{\mu_{W_{n}}}(\xi^n  \cap W_{n})+\int S_{n} \varphi(x) d \mu_{W_{n}}
	+\mu(X \backslash W_{n}) \log |\xi^n |+\mu(X \backslash W_{n}) n\|\varphi\|+\log 2 \\
	\leq \sum_{G \in \xi^n  \cap W_{n}} \mu_{W_{n}}(G)[-\log (\mu_{W_{n}}(G)+\lambda(G)]
	+\mu(X \backslash W_{n})(\log (k+1)^{n}+n\|\varphi\|)+\log 2 \\
	\leq \log \sum_{G \in \xi^n  \cap W_{n}} e^{\lambda(G)}+n(2 r+\varepsilon)(\log (k+1)+\|\varphi\|)+\log 2 \\
	\leq \log \sum_{0 \leq i \leq m(n)} \sum_{G \in \xi_{n, i}} e^{\lambda(G)}+n(2 r+\varepsilon)(\log (k+1)+\|\varphi\|)+\log 2. \\
\end{aligned}
$$

For any  $G \in \xi^{n}  \cap W_{n} $, then  $G \subset \bigcup_{i=1}^{m(n)} B_{d_{FK_{n}}}\left(x_{i}, \varepsilon\right) $, there exists  $1 \leq i \leq m(n)$  such that  $d_{FK_{n}}\left(x_{G}, x_{i}\right) \leq \varepsilon <2\varepsilon $. Let  $\tilde{\pi}$  be an  $(n,  2\varepsilon)$-match of  $x_{i}$  and  $x_{G}$  with  $|\tilde{\pi}|>(1-2\varepsilon) n $ and   for every  $j \in D({\tilde{\pi}}) $, one has
$
d\left(T^{j} x_{i}, T^{\pi(j)} x_{G}\right)<2 \varepsilon.
$
It follows that
$$
\lambda(G)=S_{n} \varphi\left(x_{G}\right) \leq S_{n} \varphi\left(x_{i}\right)+n \operatorname{Var}(\varphi, 2\varepsilon)+2n \varepsilon\|\varphi\| .
$$
Then
$$
\begin{aligned}
	&H_{\mu}(  \left.\xi^{n} \right)+\int_{X} S_{n} \varphi \mathrm{d} \mu \\
	\leq & \log \sum_{0 \leq i \leq m(n)} \sum_{G \in \xi_{n,i}} e^{S_{n} \varphi\left(x_{i}\right)+n \operatorname{Var}(\varphi, 2\varepsilon)+2n \varepsilon\|\varphi\|} +n(2r+\varepsilon)(\log (k+1)+\|\varphi\|)+\log 2 .
\end{aligned}
$$
Thus, we have
$$
\begin{aligned}
	& H_{\mu}\left(\xi^n \right)+n \int \varphi \mathrm{d} \mu = H_{\mu}\left(\xi^n\right)+\int S_{n} \varphi \mathrm{d} \mu \\
	\leq & \log \left(\left(C_{n}^{[(1-2 r-2 \varepsilon) n]}\right)^{2} \cdot(k+1)^{[(2 r+2 \varepsilon) n]+1}  \cdot \sum_{0 \leq i \leq m(n)} e^{S_{n} \varphi\left(x_{i}\right)+n \operatorname{Var}(\varphi, 2\varepsilon)+2n \varepsilon\|\varphi\|}\right) \\
	& +n(2 r+\varepsilon)(\log (k+1)+\|\varphi\|)+\log 2 .
\end{aligned}
$$
Hence,
$$
\begin{aligned}
	& \frac{1}{n} H_{\mu}\left(\xi^n \right)+\int \varphi \mathrm{d} \mu \\
	\leq & \frac{1}{n} \log \left(\sum_{0 \leq i \leq m(n)} e^{S_{n} \varphi\left(x_{i}\right)}\right)+\frac{1}{n} \log \left(C_{n}^{[(1-2 r-2 \varepsilon) n]}\right)^{2}+\frac{1}{n} \log (k+1)^{[(2 r+2 \varepsilon) n]+1} \\
	& +\operatorname{Var}(\varphi, 2\varepsilon)+2\varepsilon\|\varphi\|+(2 r+\varepsilon)(\log (k+1)+\|\varphi\|)+\frac{\log 2}{n} \\
	\leq & \frac{1}{n} \log \left(S_{d_{FK_{n}}}(\varphi, \mu, \varepsilon)+\gamma\right)+\frac{1}{n} \log (\left(C_{n}^{[(1-2 r-2 \varepsilon) n]}\right)^{2}+\frac{1}{n} \log (k+1)^{[(2 r+2 \varepsilon) n]+1} \\
	& +\operatorname{Var}(\varphi, 2\varepsilon)+2\varepsilon\|\varphi\|+(2 r+\varepsilon)(\log (k+1)+\|\varphi\|)+\frac{\log 2}{n}.
\end{aligned}
$$

Using the Stirling's formula,
$$
\lim _{n \rightarrow+\infty} \frac{1}{n} \log C_{n}^{[n (1-2 r-2 \varepsilon) ]}=-(1-2 r-2 \varepsilon)  \log (1-2 r-2 \varepsilon) -(2 r+2 \varepsilon) \log (2 r+2 \varepsilon),
$$
Furthermore,  taking  $\varepsilon \rightarrow 0 $, we can obtain 
$$
\begin{aligned}
	&\quad h_{\mu}(T, \xi)+\int \varphi \mathrm{d} \mu \\
	&\leq \lim _{\varepsilon \rightarrow 0} \liminf _{n \rightarrow \infty} \frac{1}{n} \log S_{d_{FK_{n}}}(\varphi, \mu, \varepsilon)\\
	&\quad-4 r \log 2 r-2(1-2 r) \log (1-2 r)+4 r \log (k+1)+2 r\|\varphi\|\\
	&<\lim _{\varepsilon \rightarrow 0} \liminf _{n \rightarrow \infty} \frac{1}{n} \log S_{d_{FK_{n}}}(\varphi, \mu, \varepsilon)+\delta .
\end{aligned}
$$
Therefore, we get that $$h_{\mu}(T, \eta)+\int \varphi \mathrm{d} \mu \leq h_{\mu}(T, \xi)+\int \varphi \mathrm{d} \mu+\delta<\lim _{\varepsilon \rightarrow 0} \liminf _{n \rightarrow \infty} \frac{1}{n} \log S_{d_{FK_{n}}}(\varphi, \mu, \varepsilon)+2 \delta .$$  The proof is finished.
\end{proof}

\begin{lemma}\cite{zhang2023note} \label{3.3}
 Let  $(X, T)$  be a TDS. For any  $\mu \in E(X, T)$ and  $\varphi \in C(X, \mathbb{R}) $, then
$$
\int \varphi \mathrm{d} \mu+h_{\mu}(T)=\lim _{\varepsilon \rightarrow 0} \liminf _{n \rightarrow \infty} \frac{1}{n} \log \bar{S}_{n}(\mu, \varphi, \varepsilon)=\lim _{\varepsilon \rightarrow 0} \limsup _{n \rightarrow \infty} \frac{1}{n} \log \bar{S}_{n}(\mu, \varphi, \varepsilon)
$$
and
$$
\int \varphi \mathrm{d} \mu+h_{\mu}(T)=\lim _{\varepsilon \rightarrow 0} \liminf _{n \rightarrow \infty} \frac{1}{n} \log \tilde{S}_{n}(\mu, \varphi, \varepsilon)=\lim _{\varepsilon \rightarrow 0} \limsup _{n \rightarrow \infty} \frac{1}{n} \log \tilde{S}_{n}(\mu, \varphi, \varepsilon) .
$$
\end{lemma}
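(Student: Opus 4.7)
Both identities are established by a two-sided sandwich: separately show that the $\limsup$ and the $\liminf$ are each squeezed to $h_\mu(T) + \int \varphi\, d\mu$.

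For the upper bounds, exploit the infimum built into $\bar{S}_n$ and $\tilde{S}_n$. Taking $q = 1$ in the defining infima gives
$$\bar{S}_n(\varphi, \mu, \varepsilon) \leq S_{\bar{d}_n^{1}}(\varphi, \mu, \varepsilon) = S_{\bar{d}_n}(\varphi, \mu, \varepsilon), \qquad \tilde{S}_n(\varphi, \mu, \varepsilon) \leq S_{d_n}(\varphi, \mu, \varepsilon).$$
Invoking Theorem~\ref{2.4} on the first and Theorem~2.3 on the second immediately yields $\lim_{\varepsilon \to 0} \limsup_n \frac{1}{n} \log \bar{S}_n(\varphi, \mu, \varepsilon) \leq h_\mu(T) + \int \varphi\, d\mu$, and likewise for $\tilde{S}_n$.

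For the lower bounds, I would mimic Steps~2--3 of the proof of Theorem~\ref{3.2}, carrying $q$ as a free parameter throughout and tracking that every estimate is uniform in $q$. Fix a Borel partition $\eta$ and $0 < \delta < 1$, pick $0 < r < 1/2$ with $-4r \log 2r - 2(1-2r)\log(1-2r) + 4r \log(k+1) + 2r\|\varphi\| < \delta$, and choose a partition refinement $\xi = \{B_1, \ldots, B_k, B_{k+1}\}$ with closed atoms $B_i \subset A_i$, $\mu(B_{k+1}) < r^2$ and $h_\mu(T, \eta) + \int \varphi\, d\mu \leq h_\mu(T, \xi) + \int \varphi\, d\mu + \delta$. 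Let $b = \min_{i \neq j} d(B_i, B_j) > 0$ and $0 < \varepsilon < \min\{1-2r, b(1-2r)/2\}$. For each $q \geq 1$ take the near-optimal covering $\{x_1^{(q)}, \ldots, x_{m_q(n)}^{(q)}\}$ realising $S_{\bar{d}_n^q}(\varphi, \mu, \varepsilon)$ up to $\gamma$, and define the typical set $W_n^q$ of points in this cover whose $T^q$-orbit of length $n$ visits $K = \bigcup_{i \leq k} B_i$ with frequency exceeding $1 - r$. The key $q$-uniform input is the identity
$$\int_X \frac{1}{n} \sum_{i=0}^{n-1} 1_K(T^{qi} x)\, d\mu(x) = \mu(K) > 1 - r^2,$$
which follows from $T$-invariance alone (using $\mu \circ T^{-qi} = \mu$) and, by Markov's inequality, yields $\mu(W_n^q) > 1 - 2r - \varepsilon$ for every $q$. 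The combinatorial count of $\xi^n$-atoms meeting a single $\bar{d}_n^q$-ball inside $W_n^q$ is likewise $q$-free: if $x, y \in W_n^q$ with $\bar{d}_n^q(x, y) < 2\varepsilon$, then the set of indices $i$ with $d(T^{qi}x, T^{qi}y) \geq b$ has density $< 2\varepsilon/b < 1-2r$, so on a set of density at least $1 - 2r - 2\varepsilon$ both iterates lie in $K$ at distance $< b$, forcing them into the same atom of $\{B_1, \ldots, B_k\}$. This produces
$$\bigl| \{A \in \xi^n : A \cap B_{\bar{d}_n^q}(x_i^{(q)}, \varepsilon) \cap W_n^q \neq \emptyset \} \bigr| \leq \bigl( C_n^{[(1 - 2r - 2\varepsilon) n]} \bigr)^2 (k+1)^{[(2r + 2\varepsilon) n] + 1}.$$
Feeding these into the standard chain bounding $H_\mu(\xi^n) + \int S_n^q \varphi\, d\mu$ from above, exactly as in the proof of Theorem~\ref{3.2}, produces, for every $q \geq 1$ and all sufficiently large $n$,
$$\frac{1}{n} H_\mu(\xi^n) + \int \varphi\, d\mu \leq \frac{1}{n} \log S_{\bar{d}_n^q}(\varphi, \mu, \varepsilon) + E(n, \varepsilon, r),$$
where the error $E(n,\varepsilon,r)$ reduces by Stirling's formula to $\operatorname{Var}(\varphi, 2\varepsilon) + 2\varepsilon\|\varphi\| + (2r+\varepsilon)(\log(k+1) + \|\varphi\|) + O(1/n)$ and is independent of $q$. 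Since $E$ does not depend on $q$, taking the infimum over $q$ of both sides replaces $S_{\bar{d}_n^q}$ on the right by $\bar{S}_n$; letting $n \to \infty$, then $\varepsilon \to 0$, $r \to 0$, and finally maximising over $\eta$, recovers $h_\mu(T) + \int \varphi\, d\mu \leq \lim_{\varepsilon \to 0} \liminf_n \frac{1}{n} \log \bar{S}_n(\varphi, \mu, \varepsilon)$. The argument for $\tilde{S}_n$ is strictly simpler, using the inclusion $B_{d_n^q}(x, \varepsilon) \subseteq B_{\bar{d}_n^q}(x, \varepsilon)$.

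The main obstacle is precisely this uniformity in $q$. The tempting shortcut of applying Theorem~\ref{2.4} to the system $(X, T^q)$ breaks down because a $T$-ergodic $\mu$ need not be $T^q$-ergodic, so the pointwise ergodic theorem for $T^q$ is unavailable and an immediate transfer of the formula fails. My plan deliberately avoids it and uses only the integral identity above together with Markov's inequality---both $q$-free---so that the combinatorial count and the Stirling asymptotics depend solely on $r, \varepsilon, k$. This is what makes the bound uniform in $q$ \emph{at the level of $S_{\bar{d}_n^q}$ itself} rather than at the level of its $\liminf$, and it is exactly this stronger uniformity that permits the inner infimum $\inf_{q \geq 1}$ to be absorbed without loss.
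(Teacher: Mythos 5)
The paper offers no proof of Lemma~\ref{3.3}: it is imported verbatim from \cite{zhang2023note}, so there is no in-text argument to compare yours against. Judged on its own, your architecture is the natural one and almost certainly the one used in the cited source: the upper bound by specializing the infimum to $q=1$ and quoting the known $q=1$ formulas, and the lower bound by rerunning the Katok-type counting argument with $q$ as a free parameter. Your identification of the $q$-free inputs (the integral identity via $T$-invariance plus Markov's inequality, and the combinatorial count) is correct, and you are right that one cannot shortcut via Theorem~\ref{2.4} applied to $(X,T^q)$, since $T$-ergodicity does not pass to $T^q$. Two small slips in constants are harmless: for the mean metric the bad-index density is $2\varepsilon/b$ rather than $2\varepsilon$, and the aligned-index count does not need the squared binomial coefficient of the Feldman--Katok case; neither affects the $\varepsilon\to 0$, $r\to 0$ limits.

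There is, however, one genuine gap, and it sits exactly at the point where the uniformity in $q$ is supposed to be cashed in. The ball $B_{\bar d_n^q}(x,\varepsilon)$ constrains the orbit only at the times $0,q,\dots,(n-1)q$, so your counting claim controls atoms of $\bigvee_{i=0}^{n-1}T^{-qi}\xi$, not of $\xi^n=\bigvee_{i=0}^{n-1}T^{-i}\xi$; as written, the chain bounding $H_\mu(\xi^n)+\int S_n^q\varphi\,\mathrm{d}\mu$ does not follow. Once you replace the left-hand side by $H_\mu\bigl(\bigvee_{i=0}^{n-1}T^{-qi}\xi\bigr)$, it genuinely depends on $q$, and the step ``take the infimum over $q$ of both sides'' is no longer free: for each $n$ you must pick a near-optimal $q=q(n)$ and then need a lower bound on the entropy term that is uniform in $q$. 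The repair is standard but must be stated: by subadditivity and $T$-invariance, $H_\mu\bigl(\bigvee_{j=0}^{nq-1}T^{-j}\xi\bigr)\le q\,H_\mu\bigl(\bigvee_{i=0}^{n-1}T^{-qi}\xi\bigr)$, whence $\frac1n H_\mu\bigl(\bigvee_{i=0}^{n-1}T^{-qi}\xi\bigr)\ge\frac{1}{nq}H_\mu\bigl(\bigvee_{j=0}^{nq-1}T^{-j}\xi\bigr)\ge h_\mu(T,\xi)$ for every $n$ and every $q$, the last inequality because the defining sequence of $h_\mu(T,\xi)$ is subadditive and so its limit equals its infimum. With this lemma inserted, your argument closes; without it, the central claim that the infimum over $q$ ``can be absorbed without loss'' is unsupported.
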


\begin{theorem} \label{3.4}
 Let  $(X, T)$  be a TDS. For any  $\mu \in E(X, T)$  and  $\varphi \in C(X, \mathbb{R}) $, then
$$
h_{\mu}(T)+\int \varphi \mathrm{d} \mu=\lim _{\varepsilon \rightarrow 0} \liminf _{n \rightarrow \infty} \frac{1}{n} \log S_{n}^{FK}(\varphi, \mu, \varepsilon)=\lim _{\varepsilon \rightarrow 0} \limsup _{n \rightarrow \infty} \frac{1}{n} \log S_{n}^{FK}(\varphi, \mu, \varepsilon) .
$$
\end{theorem}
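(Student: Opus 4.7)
The two inequalities will be handled separately, both driven by Theorem \ref{3.2}. The upper bound is immediate: because $S_n^{FK}(\varphi, \mu, \varepsilon) = \inf_{q \geq 1} S_{d_{FK_n^q}}(\varphi, \mu, \varepsilon) \leq S_{d_{FK_n}}(\varphi, \mu, \varepsilon)$ on specializing to $q = 1$, Theorem \ref{3.2} already gives $\lim_{\varepsilon \to 0} \limsup_{n \to \infty} \frac{1}{n} \log S_n^{FK}(\varphi, \mu, \varepsilon) \leq h_\mu(T) + \int \varphi \, d\mu$.

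For the matching lower bound, I would replay Step~2 of Theorem \ref{3.2} with $T$ replaced by $T^q$, producing an estimate that is uniform in $q$. Given a Borel partition $\eta$ of $X$ and $0 < \delta < 1$, choose the refined partition $\xi = \{B_1, \ldots, B_k, B_{k+1}\}$ and the set $K = \bigcup_{i=1}^k B_i$ exactly as there, and set
$$E_n^{(q)} = \left\{x \in X : \tfrac{1}{n}\left|\{i \in [n] : T^{qi}x \in K\}\right| \leq 1-r\right\}, \quad W_n^{(q)} = (K \cap F_n^{(q)}) \setminus E_n^{(q)},$$
where $F_n^{(q)}$ is the union of $d_{FK_n^q}$-balls almost realising $S_{d_{FK_n^q}}(\varphi, \mu, \varepsilon)$. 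The critical observation is that the Markov-type bound $\mu(E_n^{(q)}) < r$ rests only on $T$-invariance of $\mu$, through $\int \tfrac{1}{n}\sum_{i=0}^{n-1} 1_K(T^{qi}x) \, d\mu = \mu(K)$, so it holds for every $q \geq 1$; and Claim~3.2.1 transfers verbatim with the $q$-free combinatorial count $(C_n^{[(1-2r-2\varepsilon)n]})^2 (k+1)^{[(2r+2\varepsilon)n]+1}$. Carrying the rest of the computation through produces
$$\frac{1}{n} H_\mu\!\left(\bigvee_{i=0}^{n-1} T^{-qi}\xi\right) + \int \varphi \, d\mu \leq \frac{1}{n} \log S_{d_{FK_n^q}}(\varphi, \mu, \varepsilon) + \mathrm{Err}(n, \varepsilon, r, \xi),$$
with $\mathrm{Err}$ independent of $q$ and satisfying $\limsup_{\varepsilon \to 0} \limsup_{n \to \infty} \mathrm{Err}(n, \varepsilon, r, \xi) \leq \delta$ once $r$ is chosen as in Theorem \ref{3.2}.

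To trade the $q$-dependent entropy for $h_\mu(T, \xi)$, I would use subadditivity to deduce $\frac{1}{n} H_\mu(\bigvee_{i=0}^{n-1} T^{-qi}\xi) \geq h_\mu(T^q, \xi)$, combined with the inequality $h_\mu(T^q, \xi) \geq h_\mu(T, \xi)$; the latter follows from the conditional-entropy formula $h_\mu(T^q, \xi) = H_\mu(\xi \mid \bigvee_{i \geq 1} T^{-qi}\xi)$, the inclusion $\bigvee_{i \geq 1} T^{-qi}\xi \subseteq \bigvee_{i \geq 1} T^{-i}\xi$, and the monotonicity of conditional entropy in its conditioning $\sigma$-algebra. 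Taking the infimum over $q$ (which converts $S_{d_{FK_n^q}}$ into $S_n^{FK}$), then $\liminf_{n \to \infty}$, then $\varepsilon \to 0$, and finally the supremum over $\eta$ together with $\delta \to 0$, yields the lower bound $h_\mu(T) + \int \varphi \, d\mu \leq \lim_{\varepsilon \to 0} \liminf_{n \to \infty} \frac{1}{n} \log S_n^{FK}(\varphi, \mu, \varepsilon)$. The main obstacle is not any single estimate but the collective verification that every bound in Step~2 of Theorem \ref{3.2} remains valid with $T^q$ in place of $T$ while the error terms stay free of $q$; this works because the only measure-theoretic input, the Markov bound $\mu(E_n^{(q)}) < r$, needs $T$-invariance rather than $T^q$-ergodicity. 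The single genuinely new ingredient is the entropy inequality $h_\mu(T^q, \xi) \geq h_\mu(T, \xi)$, which is exactly what lets the infimum over $q$ survive the passage to the limit.
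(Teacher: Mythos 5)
Your proposal is correct and takes essentially the route the paper intends: the paper's proof of Theorem \ref{3.4} is only the one-line remark that it follows from Lemma \ref{3.3} and the method of Theorem \ref{3.2}, and your argument --- the trivial upper bound via the $q=1$ specialization, plus a rerun of Step 2 of Theorem \ref{3.2} for $T^q$ with $q$-uniform error terms and the inequality $h_{\mu}(T^q,\xi)\ge h_{\mu}(T,\xi)$ --- supplies exactly the omitted details. In particular you correctly isolate the one genuinely new ingredient (the conditional-entropy argument showing $h_{\mu}(T^q,\xi)\ge h_{\mu}(T,\xi)$) that lets the bound survive the infimum over $q$.
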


\begin{proof}
 It follows from  Lemma \ref{3.3} and a similar approach in Theorem \ref{3.2}, which we leave  to the readers here.

\end{proof}

\section{Topolgical pressure  associated with modified Feldman-katok metric $d_{F K_{n}^{q}}$}

In this section, we  will investigate the topological pressure, now let us recall the notion of topological pressure. Let  $(X, T)$  be a TDS. Given $n \in \mathbb{N}$  and  $\varphi \in C(X, \mathbb{R}) $, we consider
$$
P_{n}(T, \varphi, \varepsilon)=\inf \left\{\sum_{x_{i} \in F} e^{S_{n} \varphi\left(x_{i}\right)} \mid F \text { is a }(n, \varepsilon)\text {-spanning set for } X\right\} \text {. }
$$

For  $\varepsilon>0$  and  $\varphi \in C(X, \mathbb{R}) $, put
$$
P(T, \varphi, \varepsilon)=\limsup _{n \rightarrow \infty} \frac{1}{n} \log P_{n}(T, \varphi, \varepsilon) .
$$
When  $\varepsilon \rightarrow 0 $, we can get
$$
P_{\text {top }}(T, \varphi)=\lim _{\varepsilon \rightarrow 0} P(T, \varphi, \varepsilon) \text {. }
$$
Particularly, when $\varphi=0$,  it is well known that $P_{\text {top }}(T, 0)$ is just the classical topological entropy $h(T)$ which is defined in \cite{walters1982introduction}.

It is worth noting that the relationship between topological pressure and measure-theoretic pressure is stated as a fact in \cite{walters1982introduction} which is Theorem \ref{4.2} below, it plays an important role in the proof of Theorem \ref{4.3}.

Let  $(X, T)$  be a TDS. For  $q, n \in \mathbb{N}$  and  $\varepsilon>0 $,
$$
S_{d_{F K_{n}^{q}}}(\varphi, \varepsilon)=\inf \left\{\sum_{i=1}^{k} e^{S_{n, q}\varphi\left(x_{i}\right)} \text { s.t. } \bigcup_{i=1}^{k} B_{d_{F K_{n}^{q}}}\left(x_{i}, \varepsilon\right)=X\right\} .
$$
We define
$$
S_{n}^{FK}(\varphi, \varepsilon)=\inf _{q \geq 1} S_{d_{F K_{n}^{q}}}(\varphi, \varepsilon) .
$$

Similarly, we can define  $S_{\bar{d}_{n}^{q}}(\varphi, \varepsilon)$, $\bar{S}_{n}(\varphi, \varepsilon)$  by changing $\bar{d}_{n}^{q}$  into  $d_{FK_{n}^{q}}$ in the definition of $S_{d_{F K_{n}^{q}}}(\varphi, \varepsilon),S_{n}^{FK}(\varphi, \varepsilon)$ respectively.

 Cai et al. \cite{cai2023feldman} presented the notion of  the topological pressure with respect to Feldman-Katok metric, and  showed the topological pressure with respect to Feldman-Katok metric is equal to the classical topological pressure.

\begin{theorem}\cite{cai2023feldman} \label{4.1}
Let $ (X, T)$  be a TDS and  $\varphi \in C(X, \mathbb{R}) $. Then
$$
P_{\text {top }}(T, \varphi)=\lim _{\varepsilon \rightarrow 0} \limsup _{n \rightarrow \infty} \frac{1}{n} \log S_{d_{FK_{n}}}(\varphi, \varepsilon).
$$
\end{theorem}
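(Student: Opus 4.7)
The plan is to prove the equality via two one-sided bounds, with the non-trivial direction piggybacking on Theorem~\ref{3.2} through the classical variational principle (the fact cited from \cite{walters1982introduction}). The strategy avoids any new combinatorics by routing the topological statement through its already-established measure-theoretic counterpart.

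For the easy upper bound, I would exploit the inequality $d_{FK_n}(x,y)\le d_n(x,y)$: taking the identity bijection $\pi(i)=i$ on $[n]$ as an $(n,\delta)$-match shows that whenever $d_n(x,y)<\delta$ one has $\mathscr{F}_{n,\delta}(x,y)=0<\delta$, so $d_{FK_n}(x,y)\le\delta$. Consequently $B_{d_n}(x,\varepsilon)\subset B_{d_{FK_n}}(x,\varepsilon)$, so every Bowen $(n,\varepsilon)$-spanning set of $X$ is also a $d_{FK_n}$-$\varepsilon$-spanning set. Taking infima of the same weight $\sum_i e^{S_n\varphi(x_i)}$ over the larger family gives $S_{d_{FK_n}}(\varphi,\varepsilon)\le P_n(T,\varphi,\varepsilon)$, and letting $n\to\infty$ and then $\varepsilon\to 0$ yields
$$\lim_{\varepsilon\to 0}\limsup_{n\to\infty}\tfrac{1}{n}\log S_{d_{FK_n}}(\varphi,\varepsilon)\le P_{\text{top}}(T,\varphi).$$

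For the reverse inequality, I would apply the classical variational principle to reduce to ergodic measures, writing $P_{\text{top}}(T,\varphi)=\sup\{h_\mu(T)+\int\varphi\,d\mu:\mu\in E(X,T)\}$. For a fixed ergodic $\mu$, any $d_{FK_n}$-$\varepsilon$-spanning family $\{x_1,\dots,x_k\}$ of $X$ automatically satisfies $\mu\bigl(\bigcup_i B_{d_{FK_n}}(x_i,\varepsilon)\bigr)=1>1-\varepsilon$, so it is admissible in the measure-theoretic infimum. This gives $S_{d_{FK_n}}(\varphi,\mu,\varepsilon)\le S_{d_{FK_n}}(\varphi,\varepsilon)$. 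Combining with Theorem~\ref{3.2}, for each $\mu\in E(X,T)$,
$$h_\mu(T)+\int\varphi\,d\mu=\lim_{\varepsilon\to 0}\limsup_{n\to\infty}\tfrac{1}{n}\log S_{d_{FK_n}}(\varphi,\mu,\varepsilon)\le\lim_{\varepsilon\to 0}\limsup_{n\to\infty}\tfrac{1}{n}\log S_{d_{FK_n}}(\varphi,\varepsilon).$$
Taking the supremum over $\mu\in E(X,T)$ completes the reverse inequality and finishes the proof.

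The main obstacle in this deduction has already been resolved upstream: the entire combinatorial core, namely Claim~3.2.1's cardinality bound controlling how many $\xi^n$-atoms can meet a single $d_{FK_n}$-ball, was carried out inside the proof of Theorem~\ref{3.2}. A direct proof of the statement that bypassed the measure-theoretic formulation would have to redo those $\bar f_n$-matching estimates from scratch in order to convert a $d_{FK_n}$-spanning family into a Bowen-separated family of comparable exponential cardinality, so routing through Theorem~\ref{3.2} plus the variational principle is by far the cleanest path and isolates all the real difficulty in one place.
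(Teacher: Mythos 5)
Your argument is correct, but note that the paper does not actually prove Theorem~\ref{4.1}: it is imported wholesale from \cite{cai2023feldman}, so there is no internal proof to compare against. What you have written is, in effect, a self-contained derivation of the cited result from the paper's own ingredients, and it follows exactly the scheme the authors themselves use one theorem later for Theorem~\ref{4.3} (upper bound by comparison of balls, lower bound by combining the variational principle of Theorem~\ref{4.2} with the measure-theoretic pressure formula). Two small points are worth making explicit. First, there is no circularity: Theorem~\ref{3.2} is proved from the Bowen-metric Katok pressure formula and the $\bar f_n$-matching estimate of Claim~3.2.1, without reference to Theorem~\ref{4.1}, so it may legitimately be used here. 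Second, in the easy direction you should reconcile the closed-ball convention in the definition of an $(n,\varepsilon)$-spanning set with the open balls $B_{d_{FK_n}}(x,\varepsilon)$ appearing in $S_{d_{FK_n}}(\varphi,\varepsilon)$; an $(n,\varepsilon/2)$-spanning set gives a cover by open $d_{FK_n}$-balls of radius $\varepsilon$, and since both sides are monotone in $\varepsilon$ the limits as $\varepsilon\to 0$ are unaffected. With that adjustment your chain $P_{\mathrm{top}}(T,\varphi)\le\lim_{\varepsilon\to 0}\liminf_n\frac1n\log S_{d_{FK_n}}(\varphi,\varepsilon)\le\lim_{\varepsilon\to 0}\limsup_n\frac1n\log S_{d_{FK_n}}(\varphi,\varepsilon)\le P_{\mathrm{top}}(T,\varphi)$ is complete and in fact yields slightly more than the stated limsup identity. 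Whether Cai's original proof proceeds this way or by a direct comparison of $d_{FK_n}$-spanning and Bowen-separated families cannot be judged from this manuscript, but your routing through the measure-theoretic statement is the cleaner path given what is already established upstream.
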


\begin{theorem}\cite{walters1982introduction} \label{4.2}
	Let $ (X, T)$  be a TDS and  $\varphi \in C(X, \mathbb{R}) $. 
	then
	$$
	P_{\text {top }}(T, \varphi)=\sup \left\{h_{\mu}(T)+\int \varphi \mathrm{d} \mu: \mu \in E(X, T)\right\} .
	$$
\end{theorem}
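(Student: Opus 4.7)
The plan is the classical Misiurewicz-style proof of the variational principle adapted to the pressure setting, together with an ergodic decomposition step at the end to restrict the supremum from all of $M(X,T)$ to $E(X,T)$. I would establish the two inequalities $h_\mu(T) + \int \varphi \,\mathrm{d}\mu \leq P_{\text{top}}(T,\varphi)$ for every $\mu \in M(X,T)$ and $P_{\text{top}}(T,\varphi) \leq \sup_{\mu \in M(X,T)}\bigl(h_\mu(T) + \int \varphi \,\mathrm{d}\mu\bigr)$ separately, then combine them with the ergodic decomposition.

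For the upper bound I would fix $\mu \in M(X,T)$ and a finite Borel partition $\xi$ of $X$ whose atoms have diameter less than $\varepsilon$ and whose boundaries are $\mu$-null (obtained by standard regularity applied to a generating cover). For each $n$, choose a maximal $(n,\varepsilon)$-separated set $E_n$. Since atoms of $\xi^n = \bigvee_{i=0}^{n-1} T^{-i}\xi$ have small Bowen diameter, each such atom meets $E_n$ in at most one point after shrinking $\varepsilon$ suitably, which lets me compare $H_\mu(\xi^n) + \int S_n \varphi \,\mathrm{d}\mu$ to $\log \sum_{x \in E_n} e^{S_n \varphi(x)}$ via the elementary inequality $\sum_i p_i a_i - \sum_i p_i \log p_i \leq \log \sum_i e^{a_i}$ and an $\operatorname{Var}(\varphi,\varepsilon)$ correction. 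Dividing by $n$, letting $n \to \infty$, then $\varepsilon \to 0$, and supremizing over $\xi$ yields $h_\mu(T) + \int \varphi \,\mathrm{d}\mu \leq P_{\text{top}}(T,\varphi)$.

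For the lower bound I would, for each $\varepsilon > 0$ and $n$, take an $(n,\varepsilon)$-separated set $E_n$ nearly attaining the supremum of $\sum_{x \in E_n} e^{S_n \varphi(x)}$, and form the atomic measures $\sigma_n := \bigl(\sum_{x \in E_n} e^{S_n \varphi(x)}\bigr)^{-1} \sum_{x \in E_n} e^{S_n \varphi(x)} \delta_x$ together with their Birkhoff averages $\mu_n := \frac{1}{n}\sum_{i=0}^{n-1} T^i_* \sigma_n$. Passing to a weak$^*$ accumulation point $\mu$, which is automatically $T$-invariant, the central calculation selects a partition $\xi$ of mesh less than $\varepsilon$ with $\mu$-null boundaries, rewrites $\log\sum_{x\in E_n} e^{S_n\varphi(x)} = H_{\sigma_n}(\xi^n) + \int S_n \varphi \,\mathrm{d}\sigma_n$ (using that each atom of $\xi^n$ contains at most one point of $E_n$), and then uses the subadditive bookkeeping trick of breaking $\xi^n$ over blocks of length $q \ll n$ to rewrite the right-hand side as $\tfrac{n}{q} H_{\mu_n}(\xi^q) + n\int \varphi \,\mathrm{d}\mu_n + o(n)$. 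Passing to the weak$^*$ limit along the chosen subsequence gives $h_\mu(T,\xi) + \int \varphi \,\mathrm{d}\mu \geq P(T,\varphi,\varepsilon)$, and sending $\varepsilon \to 0$ closes this direction.

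Finally, the reduction from $M(X,T)$ to $E(X,T)$ uses the ergodic decomposition: since both $\mu \mapsto h_\mu(T)$ and $\mu \mapsto \int \varphi \,\mathrm{d}\mu$ are affine on $M(X,T)$, for any invariant $\mu$ with ergodic decomposition $\mu = \int \mu_\omega \,\mathrm{d}\tau(\omega)$ one has $h_\mu(T) + \int \varphi \,\mathrm{d}\mu = \int \bigl(h_{\mu_\omega}(T) + \int \varphi \,\mathrm{d}\mu_\omega\bigr) \,\mathrm{d}\tau(\omega)$, so the supremum over $M(X,T)$ is realized as a supremum over $E(X,T)$. The main obstacle is the delicate subadditive bookkeeping in the lower bound, where the $o(n)$ boundary terms from splitting an orbit of length $n$ into blocks of length $q$ must be controlled uniformly while the pushforwards $T^i_* \sigma_n$ are averaged against the limiting measure; this is precisely where the $\tfrac{1}{n}$ normalization defining the entropy rate must emerge from the counting sum, and getting the inequalities to line up without losing pressure mass is the crux of the argument.
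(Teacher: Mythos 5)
The paper does not prove this statement: Theorem~\ref{4.2} is quoted verbatim from Walters' textbook (it is the variational principle for pressure, Theorem~9.10 there, together with the reduction of the supremum to ergodic measures), so there is no in-paper argument to compare against. Your outline is the standard Misiurewicz-style proof from that source. The lower bound and the final reduction to $E(X,T)$ are essentially right as you describe them, with the caveat that for the last step you need the integral form $h_{\mu}(T)=\int h_{\mu_{\omega}}(T)\,\mathrm{d}\tau(\omega)$ of the Jacobs theorem, not merely affinity of the entropy map under finite convex combinations.

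The genuine gap is in your upper bound. The fact that each atom of $\xi^{n}$ meets the $(n,\varepsilon)$-separated set $E_{n}$ in at most one point is the injectivity needed for the \emph{lower} bound (it is what makes $H_{\sigma_{n}}(\xi^{n})+\int S_{n}\varphi\,\mathrm{d}\sigma_{n}=\log\sum_{x\in E_{n}}e^{S_{n}\varphi(x)}$ exact); it does nothing in the upper-bound direction. There, after the convexity inequality you must dominate $\sum_{A\in\xi^{n}}e^{\sup_{A}S_{n}\varphi}$ by $\sum_{x\in E_{n}}e^{S_{n}\varphi(x)}$ up to a subexponential factor, and the obstruction is the reverse multiplicity: a single point of the maximal (hence $(n,\varepsilon)$-spanning) set $E_{n}$ can lie within $d_{n}$-distance $2\varepsilon$ of as many as $M^{n}$ atoms of $\xi^{n}$, where $M$ is the number of atoms of $\xi$ meeting a $2\varepsilon$-ball; small diameter and $\mu$-null boundaries do not control $M$. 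The standard repair is Walters' device: replace $\xi=\{A_{1},\dots,A_{k}\}$ by $\beta=\{B_{0},B_{1},\dots,B_{k}\}$ with $B_{i}\subset A_{i}$ compact and pairwise at positive distance, so that every $d_{n}$-ball of radius the Lebesgue number of $\{B_{0}\cup B_{1},\dots,B_{0}\cup B_{k}\}$ meets at most $2^{n}$ atoms of $\beta^{n}$, and then remove the resulting additive $\log 2$ by applying the estimate to $T^{m}$ and $S_{m}\varphi$ and letting $m\to\infty$. As written, your upper bound does not close without this (or an equivalent) multiplicity argument.
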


\begin{theorem} \label{4.3}
Let $ (X, T)$  be a TDS and  $\varphi \in C(X, \mathbb{R}) $. Then
$$
P_{\text {top }}(T, \varphi)=\lim _{\varepsilon \rightarrow 0} \liminf _{n \rightarrow \infty} \frac{1}{n} \log S_{n}^{FK}(\varphi, \varepsilon)=\lim _{\varepsilon \rightarrow 0} \limsup _{n \rightarrow \infty} \frac{1}{n} \log S_{n}^{FK}(\varphi, \varepsilon)
$$
\end{theorem}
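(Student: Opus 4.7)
The plan is a standard sandwich argument that leverages three results already in place: Theorem \ref{4.1} of Cai et al.\ (which pins the classical $q=1$ Feldman-Katok topological pressure to $P_{\text{top}}(T,\varphi)$), Theorem \ref{3.4} (the measure-theoretic analogue incorporating the infimum over $q$), and the variational principle of Theorem \ref{4.2}. I would establish one inequality against the $\limsup$ and the reverse one against the $\liminf$, and then use $\liminf \leq \limsup$ to force equality of both.

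For the upper bound, I would start by specializing $q=1$ in the definition $S_n^{FK}(\varphi,\varepsilon) = \inf_{q\geq 1} S_{d_{FK_n^q}}(\varphi,\varepsilon)$, giving the pointwise estimate $S_n^{FK}(\varphi,\varepsilon) \leq S_{d_{FK_n}}(\varphi,\varepsilon)$. Applying $\tfrac{1}{n}\log$, passing to $\limsup_{n\to\infty}$, and then letting $\varepsilon\to 0$, Theorem \ref{4.1} immediately yields $\lim_{\varepsilon\to 0}\limsup_{n\to\infty}\tfrac{1}{n}\log S_n^{FK}(\varphi,\varepsilon) \leq P_{\text{top}}(T,\varphi)$.

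For the lower bound, I would use the variational principle (Theorem \ref{4.2}) to reduce matters to showing $h_\mu(T) + \int \varphi\, d\mu \leq \lim_{\varepsilon\to 0}\liminf_{n\to\infty}\tfrac{1}{n}\log S_n^{FK}(\varphi,\varepsilon)$ for every $\mu \in E(X,T)$. The crucial observation is that any topological spanning family $\{x_1,\ldots,x_k\}$ with $\bigcup_i B_{d_{FK_n^q}}(x_i,\varepsilon) = X$ automatically has full measure $\mu\bigl(\bigcup_i B_{d_{FK_n^q}}(x_i,\varepsilon)\bigr) = 1 > 1-\varepsilon$, hence is admissible in the definition of $S_{d_{FK_n^q}}(\varphi,\mu,\varepsilon)$. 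This gives $S_{d_{FK_n^q}}(\varphi,\mu,\varepsilon) \leq S_{d_{FK_n^q}}(\varphi,\varepsilon)$ for every $q\geq 1$, and taking the infimum over $q$ yields $S_n^{FK}(\varphi,\mu,\varepsilon) \leq S_n^{FK}(\varphi,\varepsilon)$. Applying Theorem \ref{3.4} to the left-hand side after taking $\liminf_n$ and sending $\varepsilon \to 0$ gives $h_\mu(T) + \int\varphi\, d\mu \leq \lim_{\varepsilon\to 0}\liminf_{n\to\infty}\tfrac{1}{n}\log S_n^{FK}(\varphi,\varepsilon)$, and then Theorem \ref{4.2} finishes the estimate after taking the supremum over ergodic $\mu$.

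The argument contains no serious obstacle; it is essentially a bookkeeping assembly of prior results. The only subtlety is to check that the infimum over $q$ interacts correctly with both comparisons---that is, $q=1$ realizes the inequality needed for the upper bound, while the measure-cover inclusion holds uniformly in $q$ and therefore survives the infimum for the lower bound. Combining the two bounds with the trivial inequality $\liminf \leq \limsup$ then pins both quantities to $P_{\text{top}}(T,\varphi)$, completing the proof.
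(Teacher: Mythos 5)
Your proposal is correct and follows essentially the same route as the paper: the upper bound via the $q=1$ specialization and Theorem \ref{4.1}, and the lower bound via the variational principle (Theorem \ref{4.2}), the inequality $S_{n}^{FK}(\varphi,\mu,\varepsilon)\leq S_{n}^{FK}(\varphi,\varepsilon)$, and Theorem \ref{3.4}. In fact you justify the measure-versus-topological comparison (full covers being admissible, uniformly in $q$) more explicitly than the paper, which simply asserts it.
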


\begin{proof}
 By Theorem \ref{4.1},  it is clear that $$\lim _{\varepsilon \rightarrow 0} \limsup _{n \rightarrow \infty} \frac{1}{n} \log  S_{n}^{FK}(\varphi, \varepsilon)\leq \lim _{\varepsilon \rightarrow 0} \limsup _{n \rightarrow \infty} \frac{1}{n} \log S_{d_{FK_{n}}}(\varphi, \varepsilon)=P_{\text {top }}(T, \varphi).$$

Next, we just suffer to show that 
$$
P_{\text {top }}(T, \varphi) \leq \lim _{\varepsilon \rightarrow 0} \liminf _{n \rightarrow \infty} \frac{1}{n} \log S_{n}^{FK}(\varphi, \varepsilon) .
$$
Using Theorem \ref{4.2}, then we need to  claim that for any  $\mu \in E(X, T)$, one has 
$$
h_{\mu}(T)+\int \varphi \mathrm{d} \mu \leq \lim _{\varepsilon \rightarrow 0} \liminf _{n \rightarrow \infty} \frac{1}{n} \log S_{n}^{FK}(\varphi, \varepsilon) .
$$

It is easy to check the fact that for  $\varepsilon>0$  and  $n \in \mathbb{N} $, $
S_{n}^{FK}(\varphi, \mu, \varepsilon) \leq S_{n}^{FK}(\varphi, \varepsilon) .
$
Afterwards, according to Theorem \ref{3.4}, then we obtain
$$
h_{\mu}(T)+\int \varphi \mathrm{d} \mu=\lim _{\varepsilon \rightarrow 0} \liminf _{n \rightarrow \infty} \frac{1}{n} \log S_{n}^{FK}(\varphi, \mu, \varepsilon) \leq \lim _{\varepsilon \rightarrow 0} \liminf _{n \rightarrow \infty} \frac{1}{n} \log S_{n}^{FK}(\varphi, \varepsilon) .
$$
Hence, we get that 
$$
P_{\text {top }}(T, \varphi)=\lim _{\varepsilon \rightarrow 0} \liminf _{n \rightarrow \infty} \frac{1}{n} \log S_{n}^{FK}(\varphi, \varepsilon)=\lim _{\varepsilon \rightarrow 0} \limsup _{n \rightarrow \infty} \frac{1}{n} \log S_{n}^{FK}(\varphi, \varepsilon)
$$
We finish the proof.
\end{proof}

\section{Measure-theoretic pressure and topolgical pressure associated with modified max-mean metric $\hat{d}_{n}^{q}$}

In this section, we will study measure-theoretic pressure and topological pressure associated with modified max-mean metric. Now we recall three metrics on $ X$. Given  $n \in \mathbb{N} $,  $x, y \in X $, let
$$
d_{n}(x, y)=\max \left\{d\left(T^{i} x, T^{i} y\right): 0 \leq i \leq n-1\right\}$$ 
$$\quad \bar{d}_{n}(x, y)=\frac{1}{n} \sum_{i=0}^{n-1} d\left(T^{i} x, T^{i} y\right)
$$
and
$$
\hat{d}_{n}(x, y)=\max \left\{\bar{d}_{k}(x, y): 1 \leq k \leq n\right\} .
$$

It is clear that, for all  $x, y \in X $,
$$
\bar{d}_{n}(x, y)\leq \hat{d}_{n}(x, y) \leq d_{n}(x, y).
$$

Next, we shall introduce the concept of  modified  max-mean metric. Let  $(X, T) $ be a TDS. Given  $\varphi \in C(X, \mathbb{R}), \mu \in M(X, T) $, $\varepsilon>0 $, $x, y \in X$  and  $n,q \in \mathbb{N}$, let $$
\hat{d}_{n}^{q}(x, y)=\max \left\{\bar{d}_{k}^{q}(x, y): 1 \leq k \leq n\right\}
$$
$$
S_{\hat{d}_{n}^{q}}(\varphi, \mu, \varepsilon)=\inf \left\{\sum_{i=1}^{k} e^{S_{n, q} \varphi\left(x_{i}\right)}, \text { s.t. } \mu\left(\bigcup_{i=1}^{k} B_{\hat{d}_{n}^{q}}\left(x_{i}, \varepsilon\right)\right)>1-\varepsilon\right\}
$$
where
$$
B_{\hat{d}_{n}^{q}}(x, \varepsilon)=\left\{y \in X: \hat{d}_{n}^{q}(x, y)<\varepsilon\right\}
$$
for any  $x \in X $. We define
$$
\hat{S}_{n}(\varphi, \mu, \varepsilon)=\inf _{q \geq 1} S_{\hat{d}_{n}^{q}}(\varphi, \mu, \varepsilon) .
$$
Similarly, we can define  $S_{\hat{d}_{n}^{q}}(\varphi, \varepsilon)$, $\hat{S}_{n}(\varphi, \varepsilon)$  by changing   $\hat{d}_{n}^{q}$  into  $d_{FK_{n}^{q}}$ in the definition of $S_{d_{F K_{n}^{q}}}(\varphi, \varepsilon),S_{n}^{FK}(\varphi, \varepsilon)$ respectively.

\begin{lemma} \label{5.1}
\cite{zhang2023note} Let $ (X, T)$  be a TDS and  $\varphi \in C(X, \mathbb{R}) $. Then
$$
P_{\text {top }}(T, \varphi)=\lim _{\varepsilon \rightarrow 0} \liminf _{n \rightarrow \infty} \frac{1}{n} \log \bar{S}_{n}(\varphi, \varepsilon)=\lim _{\varepsilon \rightarrow 0} \limsup _{n \rightarrow \infty} \frac{1}{n} \log \bar{S}_{n}(\varphi, \varepsilon)
$$
and
$$
P_{\text {top }}(T, \varphi)=\lim _{\varepsilon \rightarrow 0} \liminf _{n \rightarrow \infty} \frac{1}{n} \log \tilde{S}_{n}(\varphi, \varepsilon)=\lim _{\varepsilon \rightarrow 0} \limsup _{n \rightarrow \infty} \frac{1}{n} \log \tilde{S}_{n}(\varphi, \varepsilon) .
$$
\end{lemma}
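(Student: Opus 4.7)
The plan is to mimic the proof strategy of Theorem~\ref{4.3}: establish the upper bound by monotonicity of the spanning quantities in the underlying metric, and the lower bound via the variational principle (Theorem~\ref{4.2}) together with the measure-theoretic analogue (Lemma~\ref{3.3}). I will treat $\bar{S}_n$ in detail; the argument for $\tilde{S}_n$ is identical word for word after swapping $\bar d_n^q$ for $d_n^q$.

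\textbf{Upper bound.} Since $\bar d_n^q(x,y)\le d_n^q(x,y)$ for all $x,y\in X$ and all $q,n$, every open ball satisfies $B_{d_n^q}(x,\varepsilon)\subset B_{\bar d_n^q}(x,\varepsilon)$. Consequently any $(n,\varepsilon)$-spanning set with respect to $d_n^q$ is also a spanning set with respect to $\bar d_n^q$, giving $S_{\bar d_n^q}(\varphi,\varepsilon)\le S_{d_n^q}(\varphi,\varepsilon)$. Specializing to $q=1$ and noting that $d_n^1=d_n$ so that $S_{d_n^1}(\varphi,\varepsilon)=P_n(T,\varphi,\varepsilon)$, I obtain $\bar S_n(\varphi,\varepsilon)=\inf_{q\ge1}S_{\bar d_n^q}(\varphi,\varepsilon)\le P_n(T,\varphi,\varepsilon)$. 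Taking $\tfrac1n\log$, then $\limsup_{n\to\infty}$, then $\varepsilon\to0$ yields $\lim_{\varepsilon\to0}\limsup_{n\to\infty}\tfrac1n\log\bar S_n(\varphi,\varepsilon)\le P_{\mathrm{top}}(T,\varphi)$.

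\textbf{Lower bound.} By the variational principle (Theorem~\ref{4.2}), it suffices to show that for every $\mu\in E(X,T)$,
\[
h_\mu(T)+\int\varphi\,\mathrm{d}\mu\le\lim_{\varepsilon\to0}\liminf_{n\to\infty}\frac{1}{n}\log\bar S_n(\varphi,\varepsilon).
\]
The key comparison is $\bar S_n(\varphi,\mu,\varepsilon)\le\bar S_n(\varphi,\varepsilon)$: indeed, whenever $\{x_1,\dots,x_k\}$ is a $\bar d_n^q$-spanning set of $X$, the cover $\bigcup_i B_{\bar d_n^q}(x_i,\varepsilon)$ equals $X$ and in particular has $\mu$-measure $1>1-\varepsilon$, so it is admissible for $S_{\bar d_n^q}(\varphi,\mu,\varepsilon)$; taking the infimum over $q\ge1$ preserves the inequality. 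Applying Lemma~\ref{3.3} gives
\[
h_\mu(T)+\int\varphi\,\mathrm{d}\mu=\lim_{\varepsilon\to0}\liminf_{n\to\infty}\frac{1}{n}\log\bar S_n(\varphi,\mu,\varepsilon)\le\lim_{\varepsilon\to0}\liminf_{n\to\infty}\frac{1}{n}\log\bar S_n(\varphi,\varepsilon),
\]
and taking the supremum over $\mu\in E(X,T)$ together with Theorem~\ref{4.2} produces the desired bound.

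Combining the two bounds and the sandwich $\liminf\le\limsup$ forces equality of all four quantities and completes the first assertion. The second assertion, with $\tilde S_n$ in place of $\bar S_n$, is proved by the exact same two-step scheme: the inequalities $d_n^q\ge\bar d_n^q$ and $d_n^q(x,\cdot)\le d_n(x,\cdot)$ when $q=1$ give $\tilde S_n(\varphi,\varepsilon)\le P_n(T,\varphi,\varepsilon)$ for the upper bound, while $\tilde S_n(\varphi,\mu,\varepsilon)\le\tilde S_n(\varphi,\varepsilon)$ combined with the second half of Lemma~\ref{3.3} supplies the lower bound via Theorem~\ref{4.2}. There is no genuine obstacle here beyond correctly identifying the two monotonicity facts — the serious work has already been absorbed into Lemma~\ref{3.3} (the measure-theoretic pressure formulas for the modified mean and Bowen metrics) and into the variational principle; the present lemma is essentially the topological shadow of those measure-theoretic results.
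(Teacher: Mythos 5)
Your argument is correct and is essentially the same two-step scheme the paper itself uses to prove Theorem~\ref{4.3} (upper bound from $\bar d_n^q\le d_n^q$ and the choice $q=1$, lower bound from $\bar S_n(\varphi,\mu,\varepsilon)\le\bar S_n(\varphi,\varepsilon)$ plus Lemma~\ref{3.3} and the variational principle of Theorem~\ref{4.2}); the paper does not reprove Lemma~\ref{5.1} but simply cites it from \cite{zhang2023note}. The only cosmetic slip is in your closing remark on $\tilde S_n$, where the inequality $d_n^q\ge\bar d_n^q$ is irrelevant --- for $\tilde S_n$ the upper bound follows directly from $\tilde S_n(\varphi,\varepsilon)\le S_{d_n^1}(\varphi,\varepsilon)=P_n(T,\varphi,\varepsilon)$.
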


\begin{lemma}\label{5.2}
\cite{zhang2023note}  Let  $(X, T)$  be a TDS. Then for any  $\mu \in E(X, T)$  and  $\varphi \in C(X, \mathbb{R}) $, we have
$$
\int \varphi \mathrm{d} \mu+h_{\mu}(T)=\lim _{\varepsilon \rightarrow 0} \liminf _{n \rightarrow \infty} \frac{1}{n} \log \bar{S}_{n}(\mu, \varphi, \varepsilon)=\lim _{\varepsilon \rightarrow 0} \limsup _{n \rightarrow \infty} \frac{1}{n} \log \bar{S}_{n}(\mu, \varphi, \varepsilon)
$$
and
$$
\int \varphi \mathrm{d} \mu+h_{\mu}(T)=\lim _{\varepsilon \rightarrow 0} \liminf _{n \rightarrow \infty} \frac{1}{n} \log \tilde{S}_{n}(\mu, \varphi, \varepsilon)=\lim _{\varepsilon \rightarrow 0} \limsup _{n \rightarrow \infty} \frac{1}{n} \log \tilde{S}_{n}(\mu, \varphi, \varepsilon) .
$$
\end{lemma}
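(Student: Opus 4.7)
The plan is to adapt the partition/counting argument of Theorem \ref{3.2} to the mean-metric and Bowen-metric settings, exploiting the sandwich $\bar S_n(\mu,\varphi,\varepsilon)\le \tilde S_n(\mu,\varphi,\varepsilon)$. This sandwich follows from $\bar d_n^q\le d_n^q$, hence $B_{d_n^q}(x,\varepsilon)\subseteq B_{\bar d_n^q}(x,\varepsilon)$ and $S_{\bar d_n^q}\le S_{d_n^q}$ for every $q\ge 1$; taking the infimum over $q$ yields $\bar S_n\le\tilde S_n$. Consequently it suffices to establish the upper bound for $\tilde S_n$ and the lower bound for $\bar S_n$, as each missing direction is inherited through the sandwich.

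The upper bound for $\tilde S_n$ is immediate: set $q=1$ in the defining infimum to get $\tilde S_n(\mu,\varphi,\varepsilon)\le S_{d_n}(\varphi,\mu,\varepsilon)$, then invoke Theorem 2.3. For the lower bound on $\bar S_n$ I would prove the stronger $q$-uniform estimate $qh_\mu(T)+\int\varphi\,d\mu\le \lim_{\varepsilon\to 0}\liminf_{n\to\infty}\frac{1}{n}\log S_{\bar d_n^q}(\varphi,\mu,\varepsilon)$ for every $q\ge 1$; since $qh_\mu(T)\ge h_\mu(T)$, this survives $\inf_{q\ge 1}$ and yields the claim. Mirror Steps 2-3 of Theorem \ref{3.2}: fix a Borel partition $\eta$, refine it to $\xi=\{B_1,\dots,B_k,B_{k+1}\}$ with disjoint closed atoms $B_i\subset A_i$, $\mu(B_{k+1})<r^2$, and $b:=\min_{i\ne j}d(B_i,B_j)>0$. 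The role of the $(n,2\varepsilon)$-match is played by Markov's inequality: if $\bar d_n^q(x,y)<2\varepsilon$ with $\varepsilon$ sufficiently small compared to $b^2$, then $\{i\in[n]:d(T^{qi}x,T^{qi}y)\ge\sqrt{2\varepsilon}\}$ has cardinality at most $\sqrt{2\varepsilon}\,n$, so $T^{qi}x$ and $T^{qi}y$ share a $\xi$-atom on the complementary indices. Defining the good set $W_n$ along $T^q$-orbits exactly as in Theorem \ref{3.2}, a count identical to Claim 3.2.1 bounds the number of atoms of $\xi_n^q:=\bigvee_{j=0}^{n-1}T^{-qj}\xi$ meeting $B_{\bar d_n^q}(x_i,\varepsilon)\cap W_n$ by $(C_n^{\lfloor(1-2r-O(\sqrt{\varepsilon}))n\rfloor})^2(k+1)^{\lceil(2r+O(\sqrt{\varepsilon}))n\rceil}$.

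Inserting this bound into the entropy inequality from Theorem \ref{3.2}, using $\int S_n^q\varphi\,d\mu=n\int\varphi\,d\mu$, and applying Stirling's formula produces $h_\mu(T^q,\xi)+\int\varphi\,d\mu\le \lim_{\varepsilon\to 0}\liminf_{n\to\infty}\frac{1}{n}\log S_{\bar d_n^q}(\varphi,\mu,\varepsilon)+2\delta$ for each $\xi$; the supremum over $\xi$ then turns the left-hand side into $h_\mu(T^q)+\int\varphi\,d\mu=qh_\mu(T)+\int\varphi\,d\mu$, which is the required $q$-uniform lower bound. The main obstacle is precisely this bookkeeping: the partition entropy $\frac{1}{n}H_\mu(\xi_n^q)$ accumulates at rate $h_\mu(T^q,\xi)$ rather than $h_\mu(T,\xi)$, producing the extra factor of $q$. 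This is harmless because $q\ge 1$ only strengthens the estimate and the overshoot is absorbed by $\inf_{q\ge 1}$ in the definition of $\bar S_n$, with $q=1$ giving a tight bound (consistent with Theorem \ref{2.4}). The statement for $\tilde S_n$ then follows at once from $\tilde S_n\ge\bar S_n$.
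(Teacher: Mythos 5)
The paper does not actually prove this lemma: it is quoted from \cite{zhang2023note} (it already appears verbatim as Lemma \ref{3.3}), so there is no in-paper argument to compare against and your proposal must stand on its own. Its architecture --- the sandwich $\bar S_n\le\tilde S_n$, the upper bound via $q=1$ and Theorem 2.3, and the lower bound via the partition/counting scheme of Theorem \ref{3.2} with Markov's inequality playing the role of the $(n,2\varepsilon)$-match --- is the right one. But there is a genuine gap at the final step of the lower bound. You prove, for each \emph{fixed} $q$, the asymptotic statement $qh_\mu(T)+\int\varphi\,d\mu\le\lim_{\varepsilon\to0}\liminf_{n\to\infty}\frac1n\log S_{\bar d_n^q}(\varphi,\mu,\varepsilon)$, and then assert that because $qh_\mu(T)\ge h_\mu(T)$ this ``survives $\inf_{q\ge1}$''. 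It does not: in $\bar S_n=\inf_{q\ge1}S_{\bar d_n^q}$ the infimum sits \emph{inside} the $\liminf_n$, and in general $\liminf_n\inf_q a_{n,q}$ can be strictly smaller than $\inf_q\liminf_n a_{n,q}$, since the infimum may be attained at some $q=q(n)$ tending to infinity, where a per-$q$ bound valid only for $n\ge N(q)$ gives no information. A family of asymptotic lower bounds indexed by $q$ does not control $\inf_q S_{\bar d_n^q}$ at a fixed $n$.

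The gap is repairable, but only by making the estimate uniform in $q$ at each fixed $n$, which your ordering of quantifiers destroys. Concretely, the counting argument yields, for \emph{every} $n$ and $q$,
$$\frac1n H_\mu\Bigl(\bigvee_{j=0}^{n-1}T^{-qj}\xi\Bigr)+\int\varphi\,\mathrm{d}\mu\le\frac1n\log\bigl(S_{\bar d_n^q}(\varphi,\mu,\varepsilon)+\gamma\bigr)+E(n,\varepsilon,r),$$
with an error $E$ independent of $q$ (the combinatorial count, $\operatorname{Var}(\varphi,\cdot)$ and $\|\varphi\|$ do not see $q$). The missing ingredient is the bound $\frac1n H_\mu\bigl(\bigvee_{j=0}^{n-1}T^{-qj}\xi\bigr)\ge h_\mu(T^q,\xi)\ge h_\mu(T,\xi)$ valid for all $n$ and all $q$; the first inequality is subadditivity, and the second follows from $H_\mu\bigl(\bigvee_{j=0}^{nq-1}T^{-j}\xi\bigr)\le qH_\mu\bigl(\bigvee_{j=0}^{n-1}T^{-qj}\xi\bigr)$ after dividing by $nq$ and letting $n\to\infty$. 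One must therefore fix $\xi$ once, chosen so that $h_\mu(T,\xi)$ approximates $h_\mu(T)$ \emph{for $T$ itself}, and deduce a lower bound on $S_{\bar d_n^q}$ that is uniform in $q$ for each large $n$; only such a bound passes to $\inf_q$. Your version instead takes the supremum over $\xi$ separately for each $q$ to reach $h_\mu(T^q)=qh_\mu(T)$, which is exactly the move that loses the required uniformity.
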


Hence, according to Lemma \ref{5.1}, Lemma \ref{5.2} and  the fact $\bar{d}_{n}^{q}(x, y)\leq \hat{d}_{n}^{q}(x, y) \leq d_{n}^{q}(x, y)$,   we can derive the following conclusions, here we leave them to readers.

\begin{theorem}
  Let  $(X, T)$  be a TDS. For any  $\mu \in E(X, T)$  and  $\varphi \in C(X, \mathbb{R}) $, then
$$
\int \varphi \mathrm{d} \mu+h_{\mu}(T)=\lim _{\varepsilon \rightarrow 0} \liminf _{n \rightarrow \infty} \frac{1}{n} \log \hat{S}_{n}(\mu, \varphi, \varepsilon)=\lim _{\varepsilon \rightarrow 0} \limsup _{n \rightarrow \infty} \frac{1}{n} \log \hat{S}_{n}(\mu, \varphi, \varepsilon).
$$
\end{theorem}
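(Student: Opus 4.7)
The plan is to sandwich $\hat{S}_n(\varphi,\mu,\varepsilon)$ between $\bar{S}_n(\varphi,\mu,\varepsilon)$ and $\tilde{S}_n(\varphi,\mu,\varepsilon)$, and then invoke Lemma \ref{5.2} at both extremes. Since the author explicitly flags this as a consequence of the sandwich $\bar{d}_n^q\leq \hat{d}_n^q\leq d_n^q$ together with Lemmas \ref{5.1} and \ref{5.2}, the proposal is a direct monotonicity/squeeze argument rather than a fresh Katok-type computation.

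First I would translate the pointwise chain $\bar{d}_n^q(x,y)\leq \hat{d}_n^q(x,y)\leq d_n^q(x,y)$ into the reverse chain of ball inclusions
\begin{equation*}
B_{d_n^q}(x,\varepsilon)\;\subseteq\;B_{\hat{d}_n^q}(x,\varepsilon)\;\subseteq\;B_{\bar{d}_n^q}(x,\varepsilon)
\end{equation*}
valid for every $x\in X$, every $\varepsilon>0$ and every $n,q\in\mathbb{N}$. Hence if a finite family $\{x_1,\dots,x_k\}$ satisfies $\mu\bigl(\bigcup_i B_{d_n^q}(x_i,\varepsilon)\bigr)>1-\varepsilon$, the same family works a fortiori for the $\hat{d}_n^q$-balls, and likewise moving from $\hat{d}_n^q$ to $\bar{d}_n^q$. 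Since the cost $\sum_i e^{S_n^q\varphi(x_i)}$ depends only on the centers, taking infima over admissible families and then over $q\geq 1$ preserves the direction of the inequalities and gives
\begin{equation*}
\bar{S}_n(\varphi,\mu,\varepsilon)\;\leq\;\hat{S}_n(\varphi,\mu,\varepsilon)\;\leq\;\tilde{S}_n(\varphi,\mu,\varepsilon).
\end{equation*}

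Next I would apply Lemma \ref{5.2}, which asserts that both $\bar{S}_n(\mu,\varphi,\varepsilon)$ and $\tilde{S}_n(\mu,\varphi,\varepsilon)$ yield exactly $\int\varphi\,\mathrm{d}\mu+h_\mu(T)$ as the common value of $\lim_{\varepsilon\to 0}\liminf_{n\to\infty}\frac{1}{n}\log(\cdot)$ and $\lim_{\varepsilon\to 0}\limsup_{n\to\infty}\frac{1}{n}\log(\cdot)$. Taking $\frac{1}{n}\log$ throughout the sandwich, then $\liminf_{n\to\infty}$ (respectively $\limsup_{n\to\infty}$), and finally $\lim_{\varepsilon\to 0}$, the squeeze theorem forces
\begin{equation*}
\lim_{\varepsilon\to 0}\liminf_{n\to\infty}\frac{1}{n}\log\hat{S}_n(\varphi,\mu,\varepsilon)\;=\;\lim_{\varepsilon\to 0}\limsup_{n\to\infty}\frac{1}{n}\log\hat{S}_n(\varphi,\mu,\varepsilon)\;=\;\int\varphi\,\mathrm{d}\mu+h_\mu(T),
\end{equation*}
which is the asserted equality.

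There is essentially no obstacle here. The only small points that deserve a verification are (a) the cost functional $\sum_i e^{S_n^q\varphi(x_i)}$ is identical in the three definitions $S_{\bar{d}_n^q}$, $S_{\hat{d}_n^q}$, $S_{d_n^q}$ because it is evaluated on the centers, so the inclusion of balls transfers cleanly to the order among the $S$-quantities; and (b) the inf over $q\geq 1$ preserves inequalities that hold for each fixed $q$. Both are immediate. The actual analytical content is concentrated in Lemma \ref{5.2}; the present theorem then falls out purely by monotonicity, which is precisely why the author leaves it to the reader.
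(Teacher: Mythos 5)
Your proposal is correct and is exactly the argument the paper intends: the authors explicitly derive this theorem from the chain $\bar{d}_n^q\leq \hat{d}_n^q\leq d_n^q$ together with Lemma \ref{5.2}, leaving the details (which you have filled in accurately) to the reader. Nothing further is needed.
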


\begin{theorem}
	 Let  $(X, T)$  be a TDS. For any  $\mu \in E(X, T)$  and  $\varphi \in C(X, \mathbb{R}) $, then
	$$
	\int \varphi \mathrm{d} \mu+h_{\mu}(T)=\lim _{\varepsilon \rightarrow 0} \liminf _{n \rightarrow \infty} \frac{1}{n} \log {S}_{\hat{d}_{n}}(\mu, \varphi, \varepsilon)=\lim _{\varepsilon \rightarrow 0} \limsup _{n \rightarrow \infty} \frac{1}{n} \log {S}_{\hat{d}_{n}}(\mu, \varphi, \varepsilon).
	$$
\end{theorem}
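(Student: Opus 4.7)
The plan is to prove this by a sandwich argument, using the pointwise metric inequality $\bar{d}_n(x,y) \leq \hat{d}_n(x,y) \leq d_n(x,y)$ recorded at the start of this section together with the two already-established Katok-type formulas for the Bowen and mean metrics (Theorems 2.3 and 2.4). No new combinatorial work (in the style of Claim 3.2.1) is needed; the result falls out by pinching the $\hat{d}_n$-quantity between the $d_n$- and $\bar{d}_n$-quantities.

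First I would unpack the metric inequality into an inclusion of balls. Since $\bar{d}_n \leq \hat{d}_n \leq d_n$, for every $x \in X$ and every $\varepsilon>0$ we have
$$
B_{d_n}(x,\varepsilon) \subseteq B_{\hat{d}_n}(x,\varepsilon) \subseteq B_{\bar{d}_n}(x,\varepsilon).
$$
Consequently, if a finite set $\{x_1,\dots,x_k\}\subset X$ satisfies $\mu\bigl(\bigcup_{i=1}^{k} B_{d_n}(x_i,\varepsilon)\bigr)>1-\varepsilon$, then it also satisfies $\mu\bigl(\bigcup_{i=1}^{k} B_{\hat{d}_n}(x_i,\varepsilon)\bigr)>1-\varepsilon$, and similarly passing from $\hat{d}_n$ to $\bar{d}_n$. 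Since the summed weights $\sum_{i=1}^{k} e^{S_n\varphi(x_i)}$ are identical for the three quantities, taking infima in the definition of $S_\bullet(\varphi,\mu,\varepsilon)$ over the same class of configurations yields
$$
S_{\bar{d}_n}(\varphi,\mu,\varepsilon) \leq S_{\hat{d}_n}(\varphi,\mu,\varepsilon) \leq S_{d_n}(\varphi,\mu,\varepsilon)
$$
for every $n\in\mathbb{N}$ and every $\varepsilon>0$.

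Next I would take $\frac{1}{n}\log$ on all three sides, apply $\liminf_{n\to\infty}$ and $\limsup_{n\to\infty}$, and finally let $\varepsilon\to 0$. Theorem 2.3 identifies both extremal limits of the right-hand term with $h_\mu(T)+\int\varphi\,d\mu$, and Theorem 2.4 identifies both extremal limits of the left-hand term with the same value. A standard squeeze then gives
$$
h_\mu(T)+\int\varphi\,d\mu \;\leq\; \lim_{\varepsilon\to 0}\liminf_{n\to\infty}\frac{1}{n}\log S_{\hat{d}_n}(\varphi,\mu,\varepsilon) \;\leq\; \lim_{\varepsilon\to 0}\limsup_{n\to\infty}\frac{1}{n}\log S_{\hat{d}_n}(\varphi,\mu,\varepsilon) \;\leq\; h_\mu(T)+\int\varphi\,d\mu,
$$
forcing all inequalities to be equalities and yielding the claimed identity.

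There is essentially no obstacle in this argument: the only point of care is verifying that the $\mu$-covering condition really is preserved under enlarging balls (it is, since the union only grows), so the inequality chain on $S_\bullet$ is immediate from the definitions. Everything substantive — both the upper bound via the Bowen metric and the matching lower bound via the mean metric — has already been proved earlier in the paper, which is why the authors indicate that this theorem (along with its $q$-infimum companion) can be left to the reader.
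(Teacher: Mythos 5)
Your proposal is correct and follows essentially the same route the paper intends: squeeze $S_{\hat{d}_n}$ between $S_{\bar{d}_n}$ and $S_{d_n}$ via the ball inclusions coming from $\bar{d}_n \leq \hat{d}_n \leq d_n$, then invoke the Bowen-metric and mean-metric Katok pressure formulas (Theorems 2.3 and 2.4) at the two ends. The paper explicitly leaves this argument to the reader, citing exactly these ingredients, and your write-up supplies the details correctly.
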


\begin{theorem}
 Let  $(X, T)$  be a TDS. For any  $\mu \in E(X, T)$  and  $\varphi \in C(X, \mathbb{R}) $, then
	$$
	P_{\text {top }}(T, \varphi)=\lim _{\varepsilon \rightarrow 0} \limsup _{n \rightarrow \infty} \frac{1}{n} \log {S}_{\hat{d}_{n}}(\varphi, \varepsilon)=\lim _{\varepsilon \rightarrow 0} \liminf _{n \rightarrow \infty} \frac{1}{n} \log {S}_{\hat{d}_{n}}(\varphi, \varepsilon).
	$$
\end{theorem}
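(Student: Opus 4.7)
The plan is to sandwich $S_{\hat{d}_n}(\varphi,\varepsilon)$ between the analogous quantities for the Bowen metric $d_n$ and the mean metric $\bar{d}_n$, and then invoke the already-established pressure formulas for those two. The whole argument is driven by the trivial chain of inequalities
\[
\bar{d}_n(x,y)\le\hat{d}_n(x,y)\le d_n(x,y),
\]
which immediately yields the reverse containment of open balls
\(B_{d_n}(x,\varepsilon)\subset B_{\hat{d}_n}(x,\varepsilon)\subset B_{\bar{d}_n}(x,\varepsilon)\), and hence
\[
S_{\bar{d}_n}(\varphi,\varepsilon)\le S_{\hat{d}_n}(\varphi,\varepsilon)\le S_{d_n}(\varphi,\varepsilon).
\]

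For the upper bound, I would use the right-hand inequality above together with the classical identity $P_{\text{top}}(T,\varphi)=\lim_{\varepsilon\to 0}\limsup_n \tfrac{1}{n}\log S_{d_n}(\varphi,\varepsilon)$ (which follows from the definition via $(n,\varepsilon)$-spanning sets and the very first line of the proof of Theorem \ref{4.3} applied to the classical Bowen setting). This at once gives
\[
\lim_{\varepsilon\to 0}\limsup_{n\to\infty}\frac{1}{n}\log S_{\hat{d}_n}(\varphi,\varepsilon)\le P_{\text{top}}(T,\varphi).
\]

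For the matching lower bound, the point is to pass through the $q$-infimum version $\bar{S}_n$, for which Lemma \ref{5.1} already furnishes the desired formula. Taking $q=1$ in the definition $\bar{S}_n(\varphi,\varepsilon)=\inf_{q\ge 1}S_{\bar{d}_n^q}(\varphi,\varepsilon)$ yields $\bar{S}_n(\varphi,\varepsilon)\le S_{\bar{d}_n}(\varphi,\varepsilon)$, and combining this with the left-hand inequality above gives $\bar{S}_n(\varphi,\varepsilon)\le S_{\hat{d}_n}(\varphi,\varepsilon)$. Hence by Lemma \ref{5.1},
\[
P_{\text{top}}(T,\varphi)=\lim_{\varepsilon\to 0}\liminf_{n\to\infty}\frac{1}{n}\log\bar{S}_n(\varphi,\varepsilon)\le\lim_{\varepsilon\to 0}\liminf_{n\to\infty}\frac{1}{n}\log S_{\hat{d}_n}(\varphi,\varepsilon).
\]
The two bounds, together with the trivial ordering $\liminf\le\limsup$, force
\[
P_{\text{top}}(T,\varphi)\le\lim_{\varepsilon\to 0}\liminf_{n\to\infty}\frac{1}{n}\log S_{\hat{d}_n}(\varphi,\varepsilon)\le\lim_{\varepsilon\to 0}\limsup_{n\to\infty}\frac{1}{n}\log S_{\hat{d}_n}(\varphi,\varepsilon)\le P_{\text{top}}(T,\varphi),
\]
so all three quantities coincide. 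There is no genuine obstacle in this argument; the only point that requires minor care is to notice that one should not try to compare $S_{\hat{d}_n}$ directly to $\hat{S}_n$ (the $q$-infimum version), since that inequality goes the wrong way; the correct route is through $\bar{S}_n$ via the sandwich $\bar{d}_n\le\hat{d}_n\le d_n$, after which Lemma \ref{5.1} and the classical $d_n$-formula close both ends.
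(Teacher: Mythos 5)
Your proposal is correct and follows exactly the route the paper intends: the paper omits the proof, stating only that the result follows from Lemma \ref{5.1}, Lemma \ref{5.2} and the sandwich $\bar{d}_{n}^{q}\leq \hat{d}_{n}^{q}\leq d_{n}^{q}$, and your argument is precisely the fleshed-out version of that (upper bound from $S_{\hat{d}_n}\leq S_{d_n}=P_n(T,\varphi,\varepsilon)$, lower bound from $\bar{S}_n\leq S_{\bar{d}_n}\leq S_{\hat{d}_n}$ and Lemma \ref{5.1}). The only cosmetic quibble is your closing remark: the inequality $\hat{S}_n(\varphi,\varepsilon)\leq S_{\hat{d}_n}(\varphi,\varepsilon)$ actually points the right way for the lower bound; the real reason to avoid that route is that the formula for $\hat{S}_n$ is itself not yet established at this point, whereas Lemma \ref{5.1} for $\bar{S}_n$ is a cited result.
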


\begin{theorem}
	 Let  $(X, T)$  be a TDS. For any  $\mu \in E(X, T)$  and  $\varphi \in C(X, \mathbb{R}) $, then
	$$
	P_{\text {top }}(T, \varphi)=\lim _{\varepsilon \rightarrow 0} \liminf _{n \rightarrow \infty} \frac{1}{n} \log \hat{S}_{n}(\varphi, \varepsilon)=\lim _{\varepsilon \rightarrow 0} \limsup _{n \rightarrow \infty} \frac{1}{n} \log \hat{S}_{n}(\varphi, \varepsilon).
	$$
\end{theorem}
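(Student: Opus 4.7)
My plan is to prove the theorem by a sandwich argument that leverages the pointwise inequalities
\[
\bar{d}_n^q(x,y) \leq \hat{d}_n^q(x,y) \leq d_n^q(x,y) \qquad \text{for all } x,y \in X, \; n,q \in \mathbb{N},
\]
together with Lemma~\ref{5.1}, which already identifies $P_{\text{top}}(T,\varphi)$ with both the $\bar{S}_n$-limit and the $\tilde{S}_n$-limit (the versions built from the modified mean and modified Bowen metrics). The inequality $\hat{d}_n^q \geq \bar{d}_n^q$ simply restates that the running maximum of $\bar{d}_k^q$ dominates $\bar{d}_n^q$ itself (take $k=n$); the inequality $\hat{d}_n^q \leq d_n^q$ follows because every Cesàro average is bounded by the maximum. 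Since the $q$-parameter plays no role beyond appearing uniformly inside an $\inf_{q\geq 1}$, the bulk of the work is already done by Lemma~\ref{5.1}.

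For the upper bound, I would first fix $q$ and observe that $d_n^q(x,y)<\varepsilon$ implies $\hat{d}_n^q(x,y)<\varepsilon$, so $B_{d_n^q}(x,\varepsilon) \subset B_{\hat{d}_n^q}(x,\varepsilon)$. Consequently, any family $\{x_i\}$ whose $d_n^q$-balls of radius $\varepsilon$ cover $X$ also does so with $\hat{d}_n^q$-balls, yielding $S_{\hat{d}_n^q}(\varphi,\varepsilon) \leq S_{d_n^q}(\varphi,\varepsilon)$ for each $q$. Taking $\inf_{q\geq 1}$ gives $\hat{S}_n(\varphi,\varepsilon) \leq \tilde{S}_n(\varphi,\varepsilon)$, and then by Lemma~\ref{5.1}
\[
\lim_{\varepsilon \to 0}\limsup_{n\to\infty} \tfrac{1}{n}\log \hat{S}_n(\varphi,\varepsilon) \leq \lim_{\varepsilon \to 0}\limsup_{n\to\infty} \tfrac{1}{n}\log \tilde{S}_n(\varphi,\varepsilon) = P_{\text{top}}(T,\varphi).
\]

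For the lower bound, I use the other half of the sandwich: $\hat{d}_n^q(x,y)<\varepsilon$ implies $\bar{d}_n^q(x,y)<\varepsilon$, so $B_{\hat{d}_n^q}(x,\varepsilon) \subset B_{\bar{d}_n^q}(x,\varepsilon)$. A covering family for $\hat{d}_n^q$-balls is thus also a covering family for $\bar{d}_n^q$-balls, giving $S_{\bar{d}_n^q}(\varphi,\varepsilon) \leq S_{\hat{d}_n^q}(\varphi,\varepsilon)$ for each $q$, and hence $\bar{S}_n(\varphi,\varepsilon) \leq \hat{S}_n(\varphi,\varepsilon)$ after taking $\inf_q$. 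Invoking Lemma~\ref{5.1} once more,
\[
P_{\text{top}}(T,\varphi) = \lim_{\varepsilon \to 0}\liminf_{n\to\infty} \tfrac{1}{n}\log \bar{S}_n(\varphi,\varepsilon) \leq \lim_{\varepsilon \to 0}\liminf_{n\to\infty} \tfrac{1}{n}\log \hat{S}_n(\varphi,\varepsilon).
\]
Chaining the two bounds through the trivial $\liminf \leq \limsup$ closes the circle, establishing the common value $P_{\text{top}}(T,\varphi)$. The only subtlety worth checking carefully is that the $\inf_{q\geq 1}$ preserves the ball-inclusion inequalities, but this is immediate since the inequalities hold for each fixed $q$; no additional obstacle arises, which is precisely why the authors left this deduction to the reader.
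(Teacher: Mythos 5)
Your proposal is correct and is precisely the argument the paper intends: the authors leave this theorem to the reader, pointing exactly to Lemma~\ref{5.1} and the sandwich $\bar{d}_n^q \leq \hat{d}_n^q \leq d_n^q$, and your ball-inclusion chain $S_{\bar{d}_n^q}(\varphi,\varepsilon) \leq S_{\hat{d}_n^q}(\varphi,\varepsilon) \leq S_{d_n^q}(\varphi,\varepsilon)$ followed by $\inf_{q\geq 1}$ is the intended filling-in of the details.
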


\section{Appendix}
We will give a results about Brin-Katok formula associated with max-mean metric. It is well known  that  Brin and Katok \cite{brin2006local} introduced the Brin-Katok formula: that is, let  $(X, T)$  be a TDS and  $\mu \in M(X, T) $, then for  $\mu$-a.e.  $x \in X$,  one has
$$
h_{\mu}(T, x):=\lim _{\delta \rightarrow 0} \liminf _{n \rightarrow \infty}-\frac{\log \mu\left(B_{d_{n}}(x, \delta)\right)}{n}=\lim _{\delta \rightarrow 0} \limsup _{n \rightarrow \infty}-\frac{\log \mu\left(B_{d_{n}}(x, \delta)\right)}{n},
$$
where  $h_{\mu}(T, x)$  is  $T $-invariant and  $\int h_{\mu}(T, x) d x=h_{\mu}(T) $. 

When  Bowen metric is replaced  by Feldman-Katok metric, one has

{\bf Theorem A.1} \cite{cai2023feldman1} Let  $(X, T)$  be a TDS and  $\mu \in M(X, T) $. Then for  $\mu$-a.e. $x \in X $,
$$
h_{\mu}(T, x)=\lim _{\delta \rightarrow 0} \liminf _{n \rightarrow \infty}-\frac{\log \mu\left(B_{d_{F K_{n}}}(x, \delta)\right)}{n}=\lim _{\delta \rightarrow 0} \limsup _{n \rightarrow \infty}-\frac{\log \mu\left(B_{d_{F K_{n}}}(x, \delta)\right)}{n} .
$$

When Bowen metric is replaced  by mean metric, one has

{\bf Theorem A.2} \cite{huang2018entropy} Let  $(X, T)$  be a TDS and  $\mu \in M(X, T) $. Then for  $\mu$-a.e. $x \in X $,
$$
h_{\mu}(T, x)=\lim _{\delta \rightarrow 0} \liminf _{n \rightarrow \infty}-\frac{\log \mu\left(B_{\bar{d}_{n}}(x, \delta)\right)}{n}=\lim _{\delta \rightarrow 0} \limsup _{n \rightarrow \infty}-\frac{\log \mu\left(B_{\bar{d}_{n}}(x, \delta)\right)}{n} .
$$

Hence, with the fact $
\bar{d}_{n}(x, y)\leq \hat{d}_{n}(x, y) \leq d_{n}(x, y),
$ this yields that $B_{d_{n}}(x, \delta)\subset B_{\hat{d}_{n}}(x, \delta)\subset B_{\bar{d}_{n}}(x, \delta)$ for any $\delta>0$, hence,  we can get the  following conclusion.

{\bf Theorem A.3} Let  $(X, T)$  be a TDS and  $\mu \in M(X, T) $. Then for  $\mu$-a.e. $x \in X $, one has
$$
h_{\mu}(T, x)=\lim _{\delta \rightarrow 0} \liminf _{n \rightarrow \infty}-\frac{\log \mu\left(B_{\hat{d}_{n}}(x, \delta)\right)}{n}=\lim _{\delta \rightarrow 0} \limsup _{n \rightarrow \infty}-\frac{\log \mu\left(B_{\hat{d}_{n}}(x, \delta)\right)}{n} .
$$

%%%%%%%%%%%%%%%%%%%%%%%%%%%%%%%%%%%%%%%%%%%%%%%%%%
%%%%%%%%%%%%%%%%%%%%%%%%%%%%%%%%%%%%%%%%%%%%%%%%%%

\bigskip \noindent{\bf Acknowledgement} The authors are grateful to the referee for a careful reading of the paper and a multitude of corrections and helpful suggestions.

\bigskip \noindent{\bf Data availability}
Data sharing not applicable to this article as no datasets were generated or
analysed during the current study.

\bigskip \noindent{\bf Funding} No funding.

\bigskip \noindent{\bf Declarations}

\bigskip \noindent{\bf Conflict of interest}
The authors have no financial or proprietary interests in any material discussed
in this article.

\bigskip \noindent{\bf Ethical Approval}
Not applicable.
%%%%%%%%%%%%%%%%%%%%%%%%%%%%%%%%%%%%%%%%%%%%%%%%%%
%%%%%%%%%%%%%%%%%%%%%%%%%%%%%%%%%%%%%%%%%%%%%%%%%%

\end{document}